\theoremstyle{plain}
\newtheorem{thm}{Theorem}[section]		
\newtheorem{prop}[thm]{Proposition}
\newtheorem{lem}[thm]{Lemma}
\theoremstyle{definition}
\newtheorem{df}{Definition}[section]
\theoremstyle{remark}
\newtheorem{rmk}{Remark}[section]
\newtheorem*{ac}{Acknowledgements}
\newcommand{\zz}{\mathbb{Z}}
\newcommand{\qq}{\mathbb{Q}}
\newcommand{\rr}{\mathbb{R}}
\DeclareMathOperator{\card}{Card}
\DeclareMathOperator{\cl}{CL}
\DeclareMathOperator{\met}{M}
\DeclareMathOperator{\ult}{UM}
\DeclareMathOperator{\eee}{E}
\begin{document}

\title[Dense subsets in spaces of metrics]
{On dense subsets in spaces of metrics}
\author[Yoshito Ishiki]
{Yoshito Ishiki}
\address[Yoshito Ishiki]
{\endgraf
Graduate School of Pure and Applied Sciences
\endgraf
University of Tsukuba
\endgraf
Tennodai 1-1-1, Tsukuba, Ibaraki, 305-8571, Japan}
\email{ishiki@math.tsukuba.ac.jp}

\subjclass[2010]{Primary 54E45, Secondary 30L05}
\keywords{Space of metrics, Ultrametric, Topological distribution}

\begin{abstract}
In spaces of metrics,  we investigate topological distributions 
of 
the doubling property, the uniform disconnectedness, and 
the uniform perfectness, which are 
the  quasi-symmetrically invariant properties appearing  in the David--Semmes theorem. 
We show that 
the set of all doubling metrics and  the set of  all uniformly disconnected metrics
are dense in  spaces of metrics 
on  finite-dimensional and zero-dimensional compact metrizable spaces, respectively. 
Conversely, this denseness of the sets  implies
the finite-dimensionality,  zero-dimensionality,  
and the compactness of metrizable spaces.  
We also determine the topological distribution of  the set of all uniformly perfect metrics  in the space of metrics on the Cantor set. 
\end{abstract}

\maketitle
\section{Introduction}

For a metrizable space $X$, 
we denote by $\met(X)$ 
the set of all metrics on $X$ that generate the same topology on  $X$. 
Define 
a metric   $\mathcal{D}_{X}: \met(X)^2\to [0, \infty]$ 
by 
$\mathcal{D}_X(d, e)=\sup_{x, y\in X}|d(x, y)-e(x, y)|$. 
In \cite{Ishiki2020int}, 
the author introduced the notion of a transmissible property 
which unifies  geometric properties defined by finite subsets of 
metric spaces, 
and  
proved that 
for every non-discrete  metrizable space $X$, 
the set  of all metrics in $\met(X)$ 
not satisfying a transmissible property 
with a  singular transmissible parameter 
is dense $G_{\delta}$  in $\met(X)$. 
Since the doubling property and the uniform disconnectedness are 
transmissible properties with singular parameters, 
the set  of all non-doubling  metrics 
and  
the set of all non-uniformly disconnected metrics are
dense $G_{\delta}$ in spaces of metrics (see \cite{Ishiki2020int}). 
In contrast to \cite{Ishiki2020int}, 
in this paper,  
we investigate topological distributions 
of 
the doubling property, 
the uniform disconnectedness, 
and  the uniform perfectness in spaces of metrics.

Niemytzki and Tychonoff \cite{NT1928} proved 
that 
a metrizable space $X$  is 
compact 
if and only if 
all metrics in $\met(X)$
are complete, 
and 
the author \cite{I4} proved 
an ultrametric analogue of their result 
(see \cite[Corollary 1.3]{I4}, see also \cite[Proposition 4.10]{DV2021}). 
Nomizu and Ozeki \cite{NO1961}  proved 
that 
a second countable differentiable manifold is compact 
if and only if 
all Riemannian metrics on the manifold are complete. 
As a development of these theorems, 
in the present  paper, 
we characterize 
the compactness and the finite-dimensionality or the $0$-dimensionality  
of a metrizable space   by 
the denseness of the set of all doubling metrics 
or 
all uniform disconnected metrics 
in the  space of metrics.

In this paper, 
a topological space is said to be \emph{finite-dimensional} (resp.~\emph{$0$-dimensional}) 
if its  covering dimension is finite  (resp.~$0$). 
The definition and 
 basic properties of the covering   dimension 
 can be seen in \cite{Dimtheo}. 
 
For a metric space $(X, d)$ 
and for a subset $A$ of $X$, 
we denote by $\delta_d(A)$ the diameter of $A$, 
and 
we define  $\alpha_d(A)=\inf\{\, d(x, y)\mid x\neq y,\ x, y\in A\, \}$. 
A metric space $(X, d)$ is said to be \emph{doubling} 
if 
there exist $\beta\in (0, \infty)$ and $C\in [1, \infty)$ such that 
for every finite subset $A$ of $X$ 
we have 
$\card(A)\le C\cdot(\delta_d(A)/\alpha_d(A))^{\beta}$, 
where 
the symbol ``$\card$'' stands for the cardinality. 
Let $X$ be a topological space. 
A subset $S$ is said to be $F_{\sigma}$ (resp.~$G_{\delta}$) 
if 
$S$ is 
the union  of  countably many  closed subsets of $X$ 
(resp.~the intersection of countably many open subsets of $X$).

\begin{thm}\label{thm:dense:D}
Let $X$ be a metrizable space. 
Then  the space $X$ is   finite-dimensional and  compact 
if and only if  
 the set of all doubling metrics in $\met(X)$ 
 is  dense $F_{\sigma}$ in 
 $(\met(X), \mathcal{D}_X)$. 
\end{thm}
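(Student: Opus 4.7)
The plan is to split the equivalence into four implications: the doubling set is always $F_\sigma$; density implies finite-dimensionality; density implies compactness; and compactness plus finite-dimensionality implies density. For the $F_\sigma$ part, I would let $D_m \subseteq \met(X)$ denote the set of $d$ satisfying $\card(A) \le m \bigl(\delta_d(A)/\alpha_d(A)\bigr)^m$ for every finite $A \subseteq X$; the doubling metrics equal $\bigcup_m D_m$, and each $D_m$ is $\mathcal{D}_X$-closed since for any fixed finite $A$ both $d \mapsto \delta_d(A)$ and $d \mapsto \alpha_d(A)$ are continuous on $(\met(X), \mathcal{D}_X)$, so the defining inequality passes to uniform limits.

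\textbf{Backward direction.} If the doubling metrics are dense then $\met(X)$ contains at least one doubling metric $d$; its Assouad dimension is finite and upper bounds the covering dimension of $(X, d)$, so $\dim X < \infty$ by topological invariance. For compactness I argue by contradiction: if $X$ is not compact it contains a countably infinite closed discrete subset, which I partition as $B = \bigsqcup_{k \ge 1} B_k$ with $|B_k| = 2^k$, and I build a metric $d \in \met(X)$ in which each $B_k$ is a near-clique of diameter at most $9/8$ and minimum distance at least $1$. Concretely, I would define $d_B$ on $B$ by $d_B(b, b') = 1$ for distinct $b, b'$ in the same $B_k$ and $d_B(b, b') \ge k + \ell$ for $b \in B_k$, $b' \in B_\ell$, $k \ne \ell$, embed $(B, d_B)$ isometrically into $\ell^\infty(B)$ via the Kuratowski construction, extend continuously to $\tilde\iota \colon X \to \ell^\infty(B)$ (possible since $\ell^\infty(B)$ is an absolute retract and $B$ is closed in the metrizable space $X$), fix any $d_0 \in \met(X)$ scaled so $d_0 \le 1/8$, and set $d(x, y) = d_0(x, y) + \|\tilde\iota(x) - \tilde\iota(y)\|_\infty$. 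Any $e \in \met(X)$ with $\mathcal{D}_X(d, e) < 1/4$ then satisfies $\alpha_e(B_k) \ge 3/4$ and $\delta_e(B_k) \le 11/8$, so $\delta_e(B_k)/\alpha_e(B_k) \le 11/6$ uniformly in $k$ while $|B_k| \to \infty$; hence no such $e$ is doubling, contradicting density.

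\textbf{Forward direction.} Suppose $X$ is compact with $n := \dim X < \infty$, and let $d \in \met(X)$ and $\epsilon > 0$. I would first choose a finite $(\epsilon/2)$-net $\{p_1, \ldots, p_N\} \subseteq X$ for $d$ and define $\phi \colon X \to (\mathbb{R}^N, \ell^\infty)$ by $\phi(x) = (d(x, p_i))_{i=1}^N$; the triangle inequality and the net property give $d(x, y) - \epsilon \le \|\phi(x) - \phi(y)\|_\infty \le d(x, y)$. Since $\phi$ may fail to be injective, I combine it with a Menger--N\"obeling embedding $\psi \colon X \hookrightarrow \mathbb{R}^{2n+1}$, forming $\tilde\phi = (\phi, \delta \psi) \colon X \to \mathbb{R}^{N + 2n + 1}$ for $\delta > 0$ so small that $\delta \cdot \di(\psi(X)) \le \epsilon$. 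Then $\tilde\phi$ is continuous and injective, hence a topological embedding since $X$ is compact, so $e(x, y) := \|\tilde\phi(x) - \tilde\phi(y)\|_\infty$ lies in $\met(X)$. The $\ell^\infty$ structure yields $\mathcal{D}_X(d, e) \le \epsilon$, and $(X, e)$ is doubling because it embeds isometrically into the bounded doubling space $(\mathbb{R}^{N+2n+1}, \ell^\infty)$. This density step is the main obstacle: the finite-dimensionality of $X$ enters essentially through $\psi$, while the net map $\phi$ controls distances but may collapse points, and the weight $\delta$ balances the two so that $\tilde\phi$ gains injectivity without spoiling the $\epsilon$-approximation to $d$.
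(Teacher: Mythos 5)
Your proposal is correct and follows essentially the same route as the paper: the same four implications, the same density construction for compact finite-dimensional $X$ (your map $\phi(x)=(d(x,p_i))_{i=1}^N$ combined with a shrunken topological embedding into $\mathbb{R}^{2n+1}$ and the $\ell^\infty$-maximum is exactly the explicit form of the paper's Kuratowski plus McShane--Whitney step), and the same idea of forcing a whole ball of non-doubling metrics from a unit-separated infinite closed discrete subset when $X$ is non-compact. The only differences are substitutions of auxiliary tools rather than of strategy: a direct closedness argument for the $F_\sigma$ part instead of a citation, the inequality ``covering dimension $\le$ Assouad dimension'' instead of the Assouad embedding theorem, and a hand-built extension via the Kuratowski embedding into $\ell^\infty(B)$ and Dugundji's theorem (with the harmless but unnecessary blocks $|B_k|=2^k$) in place of Hausdorff's metric extension theorem.
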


A metric space is said to be \emph{uniformly disconnected} 
if 
there exists $\delta\in (0, 1)$ such that 
for every non-constant finite sequence $\{z_i\}_{i=1}^N$ in $X$
we have $\delta d(z_1, z_N)\le \max_{1\le i\le N}d(z_i, z_{i+1})$. 
This notion was introduced 
in  \cite{DS1997} in a different but equivalent way. 
Note that a metric space is uniformly disconnected 
if and only if 
the metric space can be  bi-Lipschitz embeddable 
into 
an ultrametric space 
(see \cite[Proposition 15.7]{DS1997}). 
Similarly to Theorem \ref{thm:dense:D}, 
we obtain: 
\begin{thm}\label{thm:denseUD}
Let $X$ be a metrizable space. 
Then  $X$ is  $0$-dimensional and  compact  
if and only if  
the set of all uniformly disconnected metrics in $\met(X)$ 
is dense $F_{\sigma}$ in 
$(\met(X), \mathcal{D}_X)$. 
\end{thm}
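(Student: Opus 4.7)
The plan is to prove both implications.

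For the forward direction ``density (and $F_\sigma$) implies $X$ is compact and $0$-dimensional'': density forces nonemptiness, so $X$ admits some uniformly disconnected metric $\rho \in \met(X)$. A standard chain argument shows that any uniformly disconnected metric space is totally disconnected (any nondegenerate connected subset admits $\eta$-chains between two fixed distinct points for all $\eta > 0$, contradicting the UD inequality in the limit), so $X$ is totally disconnected. For compactness I would argue by contradiction: a non-compact metrizable space contains a closed discrete infinite subset $\{x_n\}_{n \ge 1}$, and by Tietze extension there is a continuous $f \colon X \to [0, \infty)$ with $f(x_n) = n$. For any $d_0 \in \met(X)$, the metric $d(x, y) := \min\{d_0(x, y), 1\} + |f(x) - f(y)|$ lies in $\met(X)$ and satisfies $d(x_n, x_{n+1}) \le 2$ while $d(x_1, x_n) \ge n - 1$. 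For any UD metric $d'$ with $\mathcal{D}_X(d, d') < 1$, applying the UD inequality to the chain $x_1, \dots, x_n$ forces the UD parameter $\delta$ of $d'$ to satisfy $\delta(n - 2) < 3$ for every $n \ge 3$, hence $\delta \le 0$, a contradiction. So no UD metric lies within $\mathcal{D}_X$-distance $1$ of $d$, contradicting density. Therefore $X$ is compact, and compact plus totally disconnected plus metrizable gives $0$-dimensionality.

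For the backward direction ``$X$ compact and $0$-dimensional implies the set of UD metrics is dense $F_\sigma$'': the $F_\sigma$ structure is immediate by writing the UD set as $\bigcup_{n \ge 1} S_n$, where $S_n$ consists of metrics satisfying the UD inequality with parameter $1/n$; each $S_n$ is closed in $\mathcal{D}_X$ by passing to the limit in the defining inequality for each fixed finite chain. For density, fix $d \in \met(X)$ and $\epsilon > 0$. Since compact $0$-dimensional metrizable spaces are ultrametrizable, choose an ultrametric $\rho \in \met(X)$. By compactness and $0$-dimensionality, take a finite clopen partition $\{U_1, \dots, U_k\}$ of $X$ with $d$-mesh less than $\epsilon/3$, fix representatives $x_i \in U_i$, let $\sigma(x) = i$ when $x \in U_i$, and define the pseudometric $d'(x, y) := d(x_{\sigma(x)}, x_{\sigma(y)})$. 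The candidate is $e := d' + \eta\,\rho$ with $\eta := \epsilon/(3 \cdot \mathrm{diam}_\rho(X))$; it lies in $\met(X)$ because $d'$ vanishes on a neighborhood of any point, so the $e$-topology agrees with the $\rho$-topology. The bound $\mathcal{D}_X(d, e) \le \epsilon$ follows from $|d(x, y) - d'(x, y)| < 2\epsilon/3$ (using the mesh bound within pieces and the triangle inequality across pieces) plus $\eta\,\rho \le \epsilon/3$.

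The crux is showing that $e$ is UD with some uniform parameter. For a nontrivial chain $(z_1, \dots, z_N)$, if $\sigma(z_1) = \sigma(z_N)$ then $d'(z_1, z_N) = 0$ and the ultrametric inequality for $\rho$ yields $e(z_1, z_N) = \eta\,\rho(z_1, z_N) \le \max_i e(z_i, z_{i+1})$. Otherwise, the finite metric space $(\{x_1, \dots, x_k\}, d)$ is UD with some parameter $\delta_Y > 0$ (any finite metric space is UD, with $\delta_Y$ the ratio of its minimum to its maximum pairwise distance); projecting the chain through $\sigma$ into this finite metric gives $d'(z_1, z_N) \le \delta_Y^{-1} \max_i d'(z_i, z_{i+1})$, which combined with $\eta\,\rho(z_1, z_N) \le \max_i e(z_i, z_{i+1})$ yields $e(z_1, z_N) \le (1 + \delta_Y^{-1}) \max_i e(z_i, z_{i+1})$. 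Hence $e$ is UD with parameter $\delta_Y/(1 + \delta_Y) > 0$. The main obstacle is precisely this packaging: combining the coarse inter-piece pseudometric $d'$ with a small ultrametric $\eta\,\rho$ into a single metric on $X$ that remains close to $d$ while acquiring a uniform UD parameter.
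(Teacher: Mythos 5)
Your proposal is correct, and although the overall skeleton coincides with the paper's (refute compactness by producing a metric admitting chains with bounded steps but arbitrarily distant endpoints, so that a whole $\mathcal{D}_X$-ball avoids uniformly disconnected metrics; obtain density from a fine finite clopen partition), the technical route is genuinely different. For density, the paper glues prescribed uniformly disconnected metrics on the pieces via the amalgamation lemma (Proposition \ref{prop:amal}) and then invokes Lemma \ref{lem:k=123}, whose uniform-disconnectedness case goes through the David--Semmes bi-Lipschitz characterization and the ultrametric amalgamation (Proposition \ref{prop:amalult}); you instead take one global compatible ultrametric $\rho$, form the single-formula metric $e=d'+\eta\rho$, where $d'$ is the pullback of $d$ under the map sending each point to the representative of its piece, and verify the chain inequality directly, the key point being that the finite set of representatives is uniformly disconnected with constant $\alpha_d/\delta_d$ of that set, yielding the uniform parameter $\delta_Y/(1+\delta_Y)$; this avoids checking a case-defined triangle inequality and any appeal to bi-Lipschitz embeddings. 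For the $F_\sigma$ part the paper cites earlier results (Lemma \ref{lem:udfsigma}), while you check directly that each set $S_n$ of metrics satisfying the chain inequality with constant $1/n$ is closed under pointwise (hence $\mathcal{D}_X$-) convergence, which is an elementary and self-contained substitute. For the converse, the paper extends the Euclidean metric on a copy of $\zz$ inside a closed discrete subset via Theorem \ref{thm:Haus}, whereas you build $\min\{d_0,1\}+|f(x)-f(y)|$ with a Tietze extension $f$; both produce the same obstruction, and your derivation of $0$-dimensionality (uniform disconnectedness gives total disconnectedness by well-chainedness of connected sets, then compactness upgrades this to dimension $0$) is also sound. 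The trade-off: your argument is more elementary and self-contained for this one theorem, while the paper's amalgamation machinery is reused verbatim for the doubling, uniformly perfect, and type theorems. Only trivial edge cases deserve a sentence in a polished write-up: a one-piece partition (where $\delta_Y$ is undefined but $e=\eta\rho$ is an ultrametric, hence uniformly disconnected) and the degenerate case $\delta_\rho(X)=0$.
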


Let $X$ be a  set. 
A metric $d$ on $X$ is said to be an \emph{ultrametric} 
if 
for all $x, y, z\in X$ the metric $d$ satisfies the so-called 
strong triangle inequality:
$d(x, y)\le d(x, z)\lor d(z, y)$, 
where the symbol 
$\lor $ 
stands for the maximum operator on $\rr$. 
We say that a set 
$S$  
is a \emph{range set} 
if  
$S$ is a subset of $[0, \infty)$ 
and 
$0\in S$. 
For a range set 
$S$, 
we say that a metric 
$d$  
on 
$X$ 
is 
\emph{$S$-valued} 
if 
$d(X^2)$ 
is contained in 
$S$. 
For a range set $S$, 
and for a topological  space $X$, 
we denote by 
$\ult(X, S)$  
 the set of all $S$-valued ultrametrics  on $X$ 
 that generate 
the same topology on $X$. 
For a topological space $X$, 
and for a range set $S$, 
we define a function 
$\mathcal{UD}_X^S: \ult(X, S)^2\to [0, \infty]$
 by assigning $\mathcal{UD}_X^S(d, e)$ to  the infimum of 
 $\epsilon\in S\sqcup \{\infty\}$ such that 
 for all $x, y\in X$ we have 
 $d(x, y)\le e(x, y)\lor \epsilon$, 
 and 
 $e(x, y)\le d(x, y)\lor \epsilon$. 
The function 
$\mathcal{UD}_X^S$
 is an ultrametric on $\ult(X, S)$ valued in $\cl(S)\sqcup \{\infty\}$, 
 where $\cl(S)$ is  the closure of $S$ in $[0, \infty)$. 
 
 In \cite{I4},  
 the author investigate the topological distributions of 
 metrics not satisfying a transmissible property in 
 $(\ult(X, S), \mathcal{UD}_X^S)$.

We say that a range set $S$ has 
 \emph{countable coinitiality} if 
there exists 
a strictly decreasing sequence 
$\{r_i\}_{i\in \zz_{\ge0}}$ in $S$ convergent to $0$
as $i\to \infty$. 
As an ultrametric analogue of  Theorem \ref{thm:dense:D}, 
we obtain: 
\begin{thm}\label{thm:doubling:ult}
Let $S$ be a range set with the countable coinitiality. 
Let $X$ be an ultrametrizable space. 
Then  $X$ is compact if and only if 
the set of all doubling metrics in $\ult(X, S)$ 
is dense $F_{\sigma}$ in 
$(\ult(X, S), \mathcal{UD}_X^S)$. 
\end{thm}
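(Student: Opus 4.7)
The plan is to reduce the biconditional to three points: (i) the set $\mathscr{D}$ of doubling ultrametrics in $\ult(X,S)$ is always $F_\sigma$ in $(\ult(X,S), \mathcal{UD}_X^S)$; (ii) non-compactness of $X$ prevents $\mathscr{D}$ from being dense; and (iii) compactness of $X$ forces $\mathscr{D}$ to be dense. Part (i) follows by writing $\mathscr{D} = \bigcup_{C,\beta \in \mathbb{Z}_{\ge 1}} \mathscr{D}_{C,\beta}$ with $\mathscr{D}_{C,\beta}$ the set of $(C,\beta)$-doubling metrics: if $e_k \to e$ in $\mathcal{UD}_X^S$ and $A \subseteq X$ is finite with $\alpha_e(A) > 0$, picking $\epsilon \in S$ with $\epsilon < \alpha_e(A)$ and using the definition of $\mathcal{UD}_X^S$ forces $e_k$ to agree with $e$ on $A \times A$ for large $k$, so the $(C,\beta)$-doubling inequality for $A$ transfers to $e$.

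For (ii) I argue by contraposition. If $X$ is non-compact, then being metrizable it is not countably compact, and admits an infinite closed discrete subset $\{x_n\}_{n \ge 1}$. Fixing any $d_0 \in \ult(X,S)$ and any $c \in S$ with $c > 0$, the plan is to build $d \in \ult(X,S)$ with $d(x_n, x_m) = c$ for all $n \neq m$. For each $n$, take the $d_0$-open ball $V_n := B_{d_0}(x_n, r_n)$ with $r_n > 0$ small enough that $V_n \cap \{x_m\}_{m \neq n} = \emptyset$ and $r_n \to 0$; the ultrametric property makes the $V_n$ pairwise disjoint and clopen, and the condition $r_n \to 0$ forces $\bigcup_n V_n$ to be closed (any limit point outside lies in some $V_n$ by clopenness, or else produces a limit point of $\{x_n\}$). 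Setting $\phi(x, y) = c$ when $x$ and $y$ disagree on membership in some $V_n$ and $\phi(x, y) = 0$ otherwise gives a pseudo-ultrametric whose equivalence classes are clopen; truncating $d_0$ by $c$ yields the $S$-valued ultrametric $\hat d_0 := d_0 \wedge c$, and $d := \hat d_0 \vee \phi$ is then an $S$-valued ultrametric in $\ult(X,S)$ generating the topology, with $d(x_n, x_m) = c$ for all $n \neq m$. For any $e \in \ult(X,S)$ with $\mathcal{UD}_X^S(d,e) < c$, the definition of $\mathcal{UD}_X^S$ forces $e(x_n, x_m) = c$ as well, and so the finite sets $\{x_1, \ldots, x_N\} \subseteq (X, e)$ have $\delta_e/\alpha_e = 1$ but arbitrarily large cardinality, ruling out doubling; hence no doubling ultrametric lies in the $c$-ball around $d$, and $\mathscr{D}$ is not dense.

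For (iii), given $d \in \ult(X,S)$ and $\epsilon > 0$, countable coinitiality of $S$ provides $\eta \in S$ with $0 < \eta < \epsilon$. Compactness of $X$ and the ultrametric structure force the partition of $X$ into $d$-balls of radius $\eta$ to be finite, $X = U_1 \sqcup \cdots \sqcup U_k$. On each compact ultrametrizable piece $U_i$, I build a doubling ultrametric $e_i \in \ult(U_i, S)$ of diameter at most $\eta$ by embedding $U_i$ homeomorphically into the Cantor set and pulling back its standard doubling ultrametric, with the binary-tree scales $2^{-n}$ relabelled by a strictly decreasing sequence $\{s_n\} \subseteq S$ having $s_0 \le \eta$ (again from coinitiality). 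Gluing via $e(x, y) := e_i(x, y)$ for $x, y \in U_i$ and $e(x, y) := d(x, y)$ otherwise, a short case analysis confirms that $e$ is an $S$-valued ultrametric generating the topology of $X$, is doubling as a finite disjoint union of doubling pieces with uniform separating scale $\eta$, and satisfies $\mathcal{UD}_X^S(d, e) \le \eta < \epsilon$.

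The main obstacle is the two construction sub-problems: the $c$-separated ultrametric in (ii) and the small-diameter $S$-valued doubling ultrametric $e_i$ in (iii). Each combines the countable coinitiality of $S$ with the zero-dimensionality inherent in ultrametrizable spaces; in particular (iii) requires a Cantor-type model of compact ultrametrizable spaces together with a careful relabelling of scales, and is the point most likely to be extracted as a separate lemma.
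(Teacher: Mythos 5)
Your overall skeleton matches the paper's: show the doubling set is $F_{\sigma}$, show non-compactness kills density via an infinite closed discrete set on which all nearby metrics are forced to be $1$-separated (here $c$-separated), and show compactness gives density by chopping $X$ into finitely many small clopen pieces, replacing the metric on each piece by a small doubling $S$-valued ultrametric, and gluing. The differences are that you argue self-containedly where the paper cites its machinery: you prove the $F_{\sigma}$ claim directly instead of quoting the analogue of Lemma \ref{lem:ultdoublingfsigma}, you build the $c$-separated ultrametric by hand (truncation $d_0\wedge c$ joined with the two-valued pseudo-ultrametric $\phi$) instead of invoking the ultrametric extension theorem (Theorem \ref{thm:ultext}), and your gluing $e(x,y)=e_i(x,y)$ inside a piece, $e(x,y)=d(x,y)$ across pieces is a special case of Proposition \ref{prop:amalult} (it works because cross-distances exceed $\eta$ while all internal diameters are at most $\eta$). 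These substitutions are sound; in particular your disjointness and closedness arguments for the balls $V_n$, and the forcing $e(x_n,x_m)=c$ for $\mathcal{UD}_X^S(d,e)<c$, are correct uses of the strong triangle inequality and of the definition of $\mathcal{UD}_X^S$.

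There is one concrete gap, in step (iii): you claim that pulling back the Cantor ultrametric with the scales $2^{-n}$ ``relabelled by a strictly decreasing sequence $\{s_n\}\subseteq S$'' yields a doubling metric, but an arbitrary strictly decreasing sequence does not suffice. For the sequentially metrized Cantor space $(2^{\omega}, d_s)$, the closed ball of radius $r$ is a cylinder of level $\min\{m\mid s(m)\le r\}$, so covering a ball of radius $s(n)$ by balls of radius $s(n)/2$ needs $2^{m(n)-n}$ cylinders where $m(n)=\min\{m\mid s(m)\le s(n)/2\}$; if $s$ decays slowly (e.g.\ $s(n)=1/\log(n+2)$, which can occur as the only available scales for a range set such as $S=\{0\}\cup\{1/\log(n+2)\mid n\ge 0\}$ before thinning), then $m(n)-n$ is unbounded and $(2^{\omega},d_s)$ is not doubling, so neither is your $e_i$ guaranteed to be. The fix is short: countable coinitiality lets you pass to a subsequence with $s_{n+1}\le s_n/2$ (and $s_0\le\eta$), for which $m(n)\le n+1$, so the relabelled Cantor ultrametric is doubling and hence so is its restriction to the embedded image of $U_i$. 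With that choice made explicit, your argument is complete; note that the paper itself only asserts the existence of such small doubling $e_i\in\ult(B_i,S)$ without detail, so this is exactly the point that needed care.
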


\begin{rmk}
There are some results on relations between topological properties of 
metrizable spaces  and 
properties of spaces of ultrametrics. 
Let $X$ be an ultrametrizable space. 
Dovgoshey--Shcherbak \cite{DV2021} proved that 
 $X$ is compact if and only if 
all  $d\in \ult(X, [0, \infty))$ are totally bounded, 
and proved  that 
$X$ is 
separable if and only if 
for all $d\in \ult(X, [0, \infty))$ we have 
$\card(\{\, d(x, y)\mid x, y\in X\, \})\le \aleph_0$. 
\end{rmk}

Let $c\in (0, 1)$. 
A metric space $(X, d)$ is said to be 
\emph{$c$-uniformly perfect} 
if
for every $x\in X$, and 
for every $r\in (0, \delta_d(X))$, 
there exists $y\in X$ with $c\cdot r\le d(x, y)\le r$. 
A metric space is said to be \emph{uniformly perfect} if 
it is $c$-uniformly perfect for some $c\in (0, 1)$.  
We next investigate   topological distributions
of 
the set of all uniformly perfect metrics and its complement  
in the space of metrics on  the Cantor set. 
In this paper, 
let $\Gamma$ denote the Cantor set 
($\Gamma$ is also called the middle-third Cantor set).

\begin{thm}\label{thm:up}
The following statements  hold true:
\begin{enumerate}
\item 
The set of all uniformly perfect  metrics in $\met(\Gamma)$ 
is dense $F_{\sigma}$ in $(\met(\Gamma), \mathcal{D}_{\Gamma})$. 
\item  
The set of all non-uniformly perfect  metrics in $\met(\Gamma)$ 
is a dense $G_{\delta}$ set in $(\met(\Gamma), \mathcal{D}_{\Gamma})$.
\end{enumerate}
\end{thm}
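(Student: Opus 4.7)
My plan begins with a structural reduction. For each $c \in (0, 1)$, let $\mathcal{U}_c \subset \met(\Gamma)$ denote the set of $c$-uniformly perfect metrics. I would show that $\mathcal{U}_c$ is closed in $(\met(\Gamma), \mathcal{D}_{\Gamma})$ by a direct compactness argument: given a $\mathcal{D}_{\Gamma}$-convergent sequence $d_n \to d$ with each $d_n \in \mathcal{U}_c$, note that $\delta_{d_n}(\Gamma) \to \delta_d(\Gamma)$, so for every $x \in \Gamma$ and every $r \in (0, \delta_d(\Gamma))$, we may (for $n$ sufficiently large) select $y_n \in \Gamma$ with $c\cdot r \le d_n(x, y_n) \le r$; compactness of $\Gamma$ together with uniform convergence then produces a subsequential limit $y$ satisfying $c \cdot r \le d(x, y) \le r$. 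Consequently the set of all uniformly perfect metrics equals $\bigcup_{k \ge 1} \mathcal{U}_{1/k}$, so it is $F_{\sigma}$ and its complement is $G_{\delta}$. What remains are the two density assertions.

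For claim (1), the density of uniformly perfect metrics, my strategy is to exploit the rich self-similar clopen structure of $\Gamma$. Given $d \in \met(\Gamma)$ and $\epsilon > 0$, I would build inductively a nested sequence of finite clopen partitions $\mathcal{P}_0 \preceq \mathcal{P}_1 \preceq \cdots$ of $\Gamma$ adapted to $d$, arranging that each cell of $\mathcal{P}_n$ has $d$-diameter at most $\epsilon \cdot 2^{-n}$ and is split into at least two non-empty sub-cells at the next level (feasible because $\Gamma$ is compact, totally disconnected, and without isolated points). From this tree one obtains an ultrametric $\rho$ with value set $\{\epsilon \cdot 2^{-n}\}_{n \ge 0}$, where $\rho(x, y)$ records the deepest level at which $x$ and $y$ still share a cell; by construction, $\rho$ is uniformly perfect with constant $1/2$. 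The target metric $e$ with $\mathcal{D}_{\Gamma}(d, e) < \epsilon$ should then be obtained by patching $\rho$ (on the small scales internal to the top-level cells) with $d$ (at the coarse scales between top-level cells), using the geometric decay of cell diameters to keep $e$ uniformly close to $d$. I regard this construction as the hardest step: ensuring simultaneously that the resulting $e$ is a metric (i.e., triangle inequality across the interface between the $\rho$-regime and the $d$-regime), that $e$ inherits uniform perfectness at every scale, and that $e$ stays within $\epsilon$ of $d$, requires careful alignment of the top-level partition with the inter-cell distances of $d$.

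For claim (2), the density of non-uniformly perfect metrics, my strategy is to construct distance ``gaps'' with multiplicative ratios tending to infinity. Given $d \in \met(\Gamma)$ and $\epsilon > 0$, pick a point $p \in \Gamma$ and a strictly decreasing sequence of clopen neighborhoods $U_0 \supset U_1 \supset \cdots$ of $p$ with $\bigcap_n U_n = \{p\}$, arranged so that $\delta_d(U_N) \le \epsilon$ for some fixed $N$. Choose a rapidly decaying sequence of scales $r_n \to 0$ with $r_{n+1}/r_n \to 0$. Define $e$ to coincide with $d$ outside $U_N$ and modify $d$ inside $U_N$ so that, from $p$, the $e$-distances to points in each annular region $U_n \setminus U_{n+1}$ ($n \ge N$) concentrate near $r_n$, producing a distance spectrum with gaps of unbounded ratio. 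Then for every $c \in (0, 1)$, for all sufficiently large $n$ we have $r_{n+1} < c \cdot r_n$, so no $y \in \Gamma$ satisfies $c \cdot r_n \le e(p, y) \le r_n$, witnessing the failure of $c$-uniform perfectness. Since the modification is confined to $U_N$ of $d$-diameter at most $\epsilon$, we obtain $\mathcal{D}_{\Gamma}(d, e) \le \epsilon$; verification of the triangle inequality, while requiring attention to the transition at the boundary of $U_N$, is routine.
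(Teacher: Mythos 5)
Your topological reduction is correct, and it is actually more direct than the paper's: you show that each set $\mathcal{U}_c$ of $c$-uniformly perfect metrics is closed by a compactness/uniform-convergence argument, while the paper (its Lemma on $F_\sigma$-ness of $K$) goes through a countable dense subset of $\Gamma$, rational scales, and a diagonal argument showing that the closure of $K(c)$ lies in $K(c/4)$. Either way one gets that the uniformly perfect metrics form an $F_\sigma$ set and their complement a $G_\delta$ set, so this part stands.

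The genuine gap lies in both density arguments, and it is the same gap in each: the ``patching''/``modification confined to $U_N$'' step, which you yourself flag as the hardest part, is never carried out, and the naive version of it fails. If you keep $e$ equal to $d$ on all pairs that do not lie in a common cell (resp.\ outside $U_N$ and across its boundary) and alter $d$ only inside the cells, then for $x,z$ in a cell and $y$ outside it the triangle inequality forces $\sup_{y}\bigl(d(x,y)-d(z,y)\bigr)\le e(x,z)\le \inf_{y}\bigl(d(x,y)+d(y,z)\bigr)$, so you are in general not free to impose a prescribed internal geometry (your $1/2$-uniformly perfect ultrametric $\rho$ with values $\epsilon 2^{-n}$, or a lacunary distance spectrum $\{r_n\}$ with $r_{n+1}/r_n\to 0$) on the cells; the lower pinch can already exceed the values you want to assign. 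What is missing is exactly the paper's amalgamation lemma (Proposition \ref{prop:amal}): fix base points $p_i\in B_i$ and set $D(x,y)=e_i(x,p_i)+d(p_i,p_j)+e_j(p_j,y)$ for $x\in B_i$, $y\in B_j$ with $i\neq j$. This is automatically a metric in $\met(\Gamma)$, and $\mathcal{D}_{\Gamma}(D,d)\le 4\epsilon$ once all cells have $d$- and $e_i$-diameter at most $\epsilon$; moreover Lemma \ref{lem:k=123} shows that $D$ is uniformly perfect when every piece is (with constant $(1/2)\min\{c,\,m/\delta_D(\Gamma)\}$, $m=\min_i\delta_{e_i}(B_i)$), and is not uniformly perfect as soon as one piece is not. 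With that lemma in hand your two constructions collapse to the paper's: for (1) a single-level clopen partition with a rescaled Euclidean Cantor metric on each cell suffices (no infinite hierarchy or multi-scale interface is needed), and for (2) it suffices to glue in one small-diameter non-uniformly perfect piece, e.g.\ a sequentially metrized Cantor set with rapidly decaying scale sequence, which is your gap idea. As written, however, the triangle inequality across the interface, the $\epsilon$-closeness to $d$, and the preservation of (non-)uniform perfectness by the glued metric --- the actual content of the density claims --- are asserted rather than proved.
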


We say that a range set $S$ is \emph{exponential}
if 
there exist $a\in (0, 1)$ and $M\in [1, \infty)$ such that
for every $n\in \zz_{\ge 0}$ 
we have $[M^{-1}a^n, Ma^n]\cap S\neq \emptyset$. 
As an ultrametric analogue of Theorem \ref{thm:up}, 
we obtain: 
\begin{thm}\label{thm:upult}
Let $S$ be a range set. 
Then the following hold true:
\begin{enumerate}
\item 
The set $S$ is exponential  
if and only if 
the set of all uniformly perfect  metrics in $\ult(\Gamma, S)$ 
is dense $F_{\sigma}$ in $(\ult(\Gamma, S), \mathcal{UD}_{\Gamma}^S)$. 
\item  
The set of all non-uniformly perfect  metrics in $\ult(\Gamma, S)$ 
is dense $G_{\delta}$ in $(\ult(\Gamma, S), \mathcal{UD}_{\Gamma}^S)$. 
\end{enumerate}
\end{thm}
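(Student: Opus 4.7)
The plan splits into a soft $F_{\sigma}/G_{\delta}$ argument and two density constructions, both obtained by modifying a given ultrametric inside its small closed balls.

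\emph{Structural step and ``only if'' of (1).} For each $c \in (0,1)$, I would first verify that the set $\mathrm{UP}_c := \{d \in \ult(\Gamma, S) : d \text{ is } c\text{-uniformly perfect}\}$ is closed in $(\ult(\Gamma, S), \mathcal{UD}_\Gamma^S)$: if $d_n \to d$ with each $d_n \in \mathrm{UP}_c$, then given $x \in \Gamma$ and $r \in (0, \delta_d(\Gamma))$, choose $n$ with $\mathcal{UD}_\Gamma^S(d_n, d) < cr/2$ and a witness $y_n$ for $d_n$; the inequality $d_n(x, y_n) \geq cr > cr/2$ forces $d(x, y_n) = d_n(x, y_n) \in [cr, r]$. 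Thus the set of uniformly perfect metrics is $\bigcup_{n \geq 2} \mathrm{UP}_{1/n}$ and is $F_{\sigma}$, with $G_{\delta}$ complement. The ``only if'' direction of (1) is then immediate: a single uniformly perfect $d \in \ult(\Gamma, S)$ with constant $c$ forces $S \cap [cr, r] \neq \emptyset$ for every $r \in (0, \delta_d(\Gamma))$ (the witness distance is $S$-valued), and specialising $r = c^n \delta_d(\Gamma)$ for $n \geq 1$ yields elements of $S$ witnessing exponentiality with $a = c$ and a suitable $M$.

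\emph{Shared density construction.} Given $d \in \ult(\Gamma, S)$ and $\epsilon \in S$ with $\epsilon > 0$, I partition $\Gamma$ into the finitely many closed $d$-balls $B_1, \ldots, B_k$ of radius $\epsilon$. These are clopen in $\Gamma$ (open balls of an ultrametric are also closed), and since $\Gamma$ is perfect each $B_i$ is a Cantor set. On each $B_i$, I choose a strictly decreasing sequence $\{\sigma_m^{(i)}\}_{m \geq 0} \subset S$ with $\sigma_0^{(i)} \leq \epsilon$ and define the first-difference ultrametric $d_i(x, y) = \sigma_{n(x,y)}^{(i)}$, where $n(x, y)$ is the first differing coordinate under an identification $B_i \cong \{0, 1\}^{\nn}$. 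Define $d'$ to equal $d_i$ on $B_i \times B_i$ and $d$ on all other pairs; a case analysis using the ultrametric identity $d(x, z) = d(y, z)$ for $x, y \in B_i, z \notin B_i$ shows that $d'$ is an $S$-valued ultrametric generating the Cantor topology, and $\mathcal{UD}_\Gamma^S(d, d') \leq \epsilon$ since $d$ and $d'$ agree on every pair at distance exceeding $\epsilon$.

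\emph{Choices of $\{\sigma_m^{(i)}\}$ and verification.} For (1), ``if'' direction, exponentiality of $S$ provides $a \in (0, 1)$, $M \in [1, \infty)$, and a strictly decreasing sequence $\{s_n\} \subset S$ with $s_n \in [M^{-1}a^n, Ma^n]$. Choose $N$ with $s_N \leq \epsilon$ and set $\sigma_m^{(i)} = s_{N+m}$, so each $d_i$ is uniformly perfect with constant depending only on $a, M$. Global uniform perfectness of $d'$ is verified by splitting on the scale $r$: for $r \leq s_N$ the uniform perfectness of $d_i$ inside the ball of $x$ is used directly, and for $r > s_N$ only the finitely many inter-ball distances $D_{ij} = d(B_i, B_j)$ are relevant. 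In the ultrametric $d'$ every point sees its diameter-realizing partner, so the set of attainable distances $> s_N$ from a fixed $x \in B_i$ consists of $\{D_{ij} : j \neq i\}$, a finite positive set; the consecutive ratios of this set together with the transition ratio $s_N / \min_j D_{ij}$ give a positive lower bound on the admissible constant. For (2), I instead extract from any member of $\ult(\Gamma, S)$ a sequence $\{t_m\} \subset S$ with $t_0 \leq \epsilon$ and $t_{m+1}/t_m \to 0$ (possible because the distance values of any such ultrametric accumulate at $0$, from which a rapidly-decaying subsequence is extracted), set $\sigma_m^{(1)} = t_m$, and keep $d_i = d|_{B_i}$ for $i \neq 1$; the vanishing ratios inside $B_1$ preclude any positive uniform-perfectness constant for $d'$.

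\emph{Main obstacle.} The subtlest point is the global uniform-perfectness verification in the ``if'' direction of (1): although the exponential sequence controls fine scales cleanly, one must ensure that no unbounded ratio jump is inherited at the transition between the fine geometric scale $s_N$ and the preserved coarse scales of $d$. This is where compactness of $\Gamma$ is essential, producing only finitely many balls $B_i$; combined with the ultrametric property that the distance between two distinct balls is constant on the pair, it forces both the transition and coarse ratios to form a positive finite set, which together with the fine constant from $d_i$ provides a positive global uniform-perfectness constant.
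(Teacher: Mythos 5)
Your proposal is correct, and its overall architecture is the same as the paper's: partition $\Gamma$ into finitely many small clopen pieces, replace the metric inside the pieces by ``first-difference'' (sequentially metrized Cantor) ultrametrics built from an exponentially decaying sequence in $S$ (for density of uniform perfectness) or a super-geometrically decaying one (for density of its complement), glue, and combine with an $F_\sigma$ structure lemma; your ``only if'' argument (a single $c$-uniformly perfect $S$-valued ultrametric forces $S\cap[cr,r]\neq\emptyset$ for all small $r$) is just the contrapositive of the paper's (non-exponential $S$ admits no uniformly perfect member of $\ult(\Gamma,S)$). Where you differ is in how the supporting lemmas are obtained, and your substitutes are all valid: (i) you prove each $\mathrm{UP}_c$ is literally closed by exploiting the rigidity of $\mathcal{UD}_\Gamma^S$ (if $\mathcal{UD}_\Gamma^S(d,e)<\epsilon$ then $d$ and $e$ agree on all pairs at distance $>\epsilon$), which is cleaner than the paper's Lemma \ref{lem:upult}, proved by adapting the diagonal argument of Lemma \ref{lem:upfsigma} and only yielding $\cl(K(c))\subset K(c/4)$; (ii) you glue along the canonical partition into closed $\epsilon$-balls and simply keep $d$ on inter-ball pairs, which (since inter-ball distances exceed $\epsilon$ and are constant on each pair of balls) coincides with the paper's basepoint amalgamation of Proposition \ref{prop:amalult}, at the cost of redoing the case analysis you sketch; (iii) at coarse scales you verify uniform perfectness of the glued metric via the finite set of inter-ball distances and the fact that in an ultrametric space every point realizes the diameter, whereas the paper's Lemma \ref{lem:k=123ult}(1) gets away with a simpler device (a point inside the same ball at distance comparable to that ball's diameter); and for (2) you build an explicit non-uniformly-perfect model instead of invoking Lemma \ref{lem:k=123ult}(2) abstractly, with the same effect since your bad scales lie below $\epsilon$ while all exterior distances exceed $\epsilon$. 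Two small steps you assert without proof are exactly the paper's Lemmas \ref{lem:conE} and \ref{lem:sequp}: the extraction of a \emph{strictly decreasing} (or at least non-increasing) sequence $s(n)\in S\cap[M^{-1}a^n,Ma^n]$ does require the short renormalization argument of Lemma \ref{lem:conE}, since the raw exponentiality condition does not by itself give monotonicity; this is easy to fill in but should be said.
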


Let $(X, d)$ and $(Y, e)$ be  metric spaces. 
A homeomorphism $f:X\to Y$ is said to be 
\emph{quasi-symmetric} 
if 
there exists  a homeomorphism 
$\eta:[0,\infty)\to [0,\infty)$ 
such that 
for all $x, y, z\in X$ 
and 
for every $t\in [0, \infty)$ 
the inequality  $d(x, y)\le td(x, z)$
implies  the inequality
$e(f(x), f(y))\le \eta(t)e(f(x), f(z))$. 
For example, 
all bi-Lipschitz homeomorphisms  are quasi-symmetric. 
Note that the doubling property, 
the uniform disconnectedness, 
and 
the uniform perfectness are
invariant under quasi-symmetric maps. 
David and Semmes \cite{DS1997} proved  
that 
if 
a compact metric space is  doubling,  
uniformly disconnected, 
and 
uniformly perfect,  
then  
 it is quasi-symmetrically equivalent 
 to the  Cantor set $\Gamma$ 
 equipped  with the Euclidean metric 
 (\cite[Proposition 15.11]{DS1997}). 
To simplify our description,
 the symbols 
 $\mathcal{P}_1$, 
 $\mathcal{P}_2$,  
 and $\mathcal{P}_3$ 
stand for the doubling property, 
the uniform disconnectedness, 
and 
the uniform perfectness, 
respectively. 
Before stating our results, 
for the sake of simplicity, 
we introduce the following notions:
\begin{df}
If a metric space $(X, d)$ satisfies 
a property 
$\mathcal{P}$, 
then we write 
$T_{\mathcal{P}}(X, d)=1$; 
otherwise, 
$T_{\mathcal{P}}(X, d)=0$.
For a triple $(u_1,u_2,u_3)\in \{0,1\}^3$, 
we say 
that 
a metric space $(X, d)$ 
is \emph{of type $(u_1,u_2,u_3)$} 
if we have
$T_{\mathcal{P}_k}(X, d)=u_k$ 
for all 
$k\in \{1, 2, 3\}$. 
\end{df}

A topological space is said to be a 
\emph{Cantor space} 
if 
it is 
homeomorphic to the Cantor set $\Gamma$. 
For a metric space $(X, d)$, 
we denote by 
$\mathcal{G}(X, d)$ 
the \emph{conformal gauge} of $(X, d)$ 
defined as 
the quasi-symmetric equivalent class of 
$(X, d)$, 
which 
is a basic concept in the conformal dimension theory 
(see e.g., \cite{MT2010}).
For each $(u,v,w)\in \{0,1\}^3$, 
we define 
\[
\mathscr{M}(u,v,w)
=
\{\,
\mathcal{G}(X,d)\mid \text{$(X,d)$ is a Cantor space of type $(u,v,w)$}
\,\}.
\]
\par

The  David--Semmes theorem mentioned above states 
that 
$\mathscr{M}(1,1,1)$ is a singleton. 
In contrast to this, 
 the author \cite{Ishiki2019} proved 
 that 
for every $(u,v,w)\in \{0,1\}^3$ except $(1,1,1)$, 
we have 
$\card(\mathscr{M}(u,v,w))=2^{\aleph_0}$ (see \cite[Theorem 2]{Ishiki2019}). 
As a development of this result, 
we investigate  topological distributions of metics 
of type $(u,v,w)$ for all 
$(u, v, w)\in \{0, 1\}^3$. 

\par
For every $(u, v, w)\in \{0, 1\}^3$, 
we put
\[
\eee(u, v, w)
=
	\{\,
	d\in \met(\Gamma)\mid \text{$(\Gamma, d)$ is of type $(u, v, w)$}
	\,\}.
\] 
Let $X$ be a topological space. 
A subset $M$ of $X$ is said to be 
$F_{\sigma\delta}$ 
(resp.~$G_{\delta\sigma}$) 
if 
$M$ is the intersection   of  countably many  
$F_{\sigma}$ subsets of $X$ 
(resp.~the union  of countably many $G_{\delta}$ subsets of $X$).  

\begin{thm}
The following three statement  hold true:
\begin{enumerate}\label{thm:type}
\item 
The set $\eee(1,1,1)$ 
is a dense $F_{\sigma}$ set of 
$(\met(\Gamma), \mathcal{D}_{\Gamma})$.
\item 
The set $\eee(0,0,0)$ 
is a dense $G_{\delta}$ 
subset of 
$(\met(\Gamma), \mathcal{D}_{\Gamma})$.

\item 
For every $(u, v, w)\in \{0, 1\}^{3}$ 
except $(1,1,1)$ and $(0,0,0)$, 
the set $\eee(u, v, w)$ is 
a dense $G_{\delta\sigma}$ and $F_{\sigma\delta}$ subset of 
$(\met(\Gamma), \mathcal{D}_{\Gamma})$.
\end{enumerate}
\end{thm}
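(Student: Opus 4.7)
The strategy is to first determine the Borel complexity of each set $\eee(u,v,w)$ from the $F_\sigma$ structure of the three underlying property sets, and then to establish density of each, using the Baire category theorem where possible and an explicit gluing construction otherwise.

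For complexity, let $D$, $U$, $P$ denote the sets of doubling, uniformly disconnected, and uniformly perfect metrics in $\met(\Gamma)$ respectively. Fixing a rational value of the parameter appearing in each defining inequality yields a closed subset of $\met(\Gamma)$ (the inequalities persist under uniform convergence on the compact space $\Gamma$), so $D$, $U$, and $P$ are $F_\sigma$ with $G_\delta$ complements. The three parameters can be collapsed into a single integer $n$ --- for instance, $(n,n)$-doubling together with $(1/n)$-uniformly disconnected and $(1/n)$-uniformly perfect --- so $\eee(1,1,1)$ is $F_\sigma$. The set $\eee(0,0,0)$ is $G_\delta$ as the intersection of three $G_\delta$ sets. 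Each of the six remaining $\eee(u,v,w)$ can be written as $A\cap B$ with $A$ an $F_\sigma$ set and $B$ a $G_\delta$ set, because intersections among $D$, $U$, $P$ alone collapse to $F_\sigma$ and intersections among $D^c$, $U^c$, $P^c$ alone collapse to $G_\delta$. Writing $A=\bigcup_n A_n$ with each $A_n$ closed and $B=\bigcap_m B_m$ with each $B_m$ open, the identities $A\cap B=\bigcap_m(A\cap B_m)=\bigcup_n(A_n\cap B)$ then exhibit $A\cap B$ as simultaneously $F_{\sigma\delta}$ and $G_{\delta\sigma}$, using that in a metric space every open set is $F_\sigma$ and every closed set is $G_\delta$.

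For density, part (2) follows from Baire category: each of $D^c$, $U^c$, $P^c$ is dense $G_\delta$ by \cite{Ishiki2020int} and Theorem~\ref{thm:up}(2), and $(\met(\Gamma),\mathcal{D}_\Gamma)$ is a Baire space, so $\eee(0,0,0)$ is dense $G_\delta$. Parts (1) and (3) require an explicit construction. Given $e\in\met(\Gamma)$, $\epsilon>0$, and a target type $(u,v,w)$, I would choose a clopen partition $\Gamma=\bigsqcup_{i=1}^N K_i$ whose pieces have $e$-diameter less than $\epsilon/10$; each $K_i$ is itself homeomorphic to $\Gamma$, so by the Euclidean metric (for $(1,1,1)$) and by \cite[Theorem~2]{Ishiki2019} (otherwise) I can install on $K_i$ a suitably rescaled prototype metric $\rho_i$ of type $(u,v,w)$. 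The metric $d$ is then defined to agree with $\rho_i$ within each piece and to equal $e(x_i,x_j)$ between distinct pieces $K_i,K_j$ at chosen representatives $x_i\in K_i$; the smallness of $\rho_i$-diameters relative to inter-piece $e$-distances delivers both the triangle inequality and the bound $\mathcal{D}_\Gamma(d,e)<\epsilon$.

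The hard part is verifying that the glued $d$ actually has the prescribed type, since each of the three properties interleaves local (within-piece) behavior inherited from $\rho_i$ with global (between-piece) behavior inherited from $e$, and the constants in the three definitions have to be controlled simultaneously. The most delicate case is uniform perfectness when $w=1$: one must preclude a ``scale gap'' between the diameter of each $K_i$ and the smallest inter-piece $e$-distance involving $K_i$. The remedy is to iterate the construction along a nested sequence of clopen refinements at geometrically decreasing scales, so that at every intermediate scale a point of the prescribed distance is present, tuning the doubling exponent and the uniform-disconnectedness constant in parallel. Once the scales interlock, the type of $d$ reduces to that of the prototypes $\rho_i$ combined with the automatic doubling, uniform disconnectedness, and uniform perfectness of the finite quotient metric space $(\{x_1,\dots,x_N\},e)$, completing the verification.
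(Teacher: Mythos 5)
Your Borel-complexity analysis is essentially the paper's: write $\eee(u,v,w)=W(1,u)\cap W(2,v)\cap W(3,w)$ with each $W(k,1)$ an $F_\sigma$ set (Lemmas \ref{lem:doublingfsigma}, \ref{lem:udfsigma}, \ref{lem:upfsigma}) and each $W(k,0)$ a $G_\delta$ set, and use that in a metrizable space the intersection of an $F_\sigma$ set with a $G_\delta$ set is both $F_{\sigma\delta}$ and $G_{\delta\sigma}$; that part of your proposal is fine. Your route to the density in (2) via Baire category (intersecting the dense $G_\delta$ sets of non-doubling, non-uniformly disconnected and non-uniformly perfect metrics) differs from the paper, which instead proves density of \emph{all eight} types by one amalgamation argument; your route is legitimate provided you justify that $(\met(\Gamma),\mathcal{D}_\Gamma)$ is a Baire space (it is: on the compact space $\Gamma$ it is a $G_\delta$ subset of the complete space of continuous pseudometrics on $\Gamma^2$), which you assert without argument.

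The genuine gap is in (1) and (3), exactly at the point you yourself flag as ``the hard part'': showing that the glued metric has the prescribed type. Your remedy --- iterating nested clopen refinements at geometrically decreasing scales so that ``the scales interlock'' --- is both unnecessary and unsubstantiated: an infinite iteration changes the metric at all scales and you verify neither that it stays within $\epsilon$ of $e$ nor that it still fails the properties corresponding to the $0$-entries of the type; moreover your supporting claim that the finite quotient $(\{x_1,\dots,x_N\},e)$ is uniformly perfect is false (no finite metric space with at least two points is uniformly perfect). The correct resolution, which is the content of the paper's Lemma \ref{lem:k=123} applied to the amalgam of Proposition \ref{prop:amal}, needs no multi-scale construction because the uniform perfectness constant is allowed to degrade with the partition: if each piece $B_i$ is $c$-uniformly perfect and $m=\min_i\delta_{e_i}(B_i)>0$, then $D$ is $C$-uniformly perfect with $C=\tfrac12\min\{c,\,m/\delta_D(X)\}$ (for $r$ below the piece diameter use the piece, for larger $r<\delta_D(X)$ use a point of the same piece at distance about $m/2\ge Cr$); doubling and uniform disconnectedness glue by taking maxima of constants, the latter via bi-Lipschitz embedding into the ultrametric amalgam of Proposition \ref{prop:amalult}. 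For the $0$-entries one must also prove the \emph{negative} transfer, which you do not address: non-doubling and non-uniform disconnectedness persist because these properties are hereditary to subspaces, while non-uniform perfectness persists because every point outside $B_1$ is at $D$-distance at least $\min_{i\neq j}d(p_i,p_j)$ from $B_1$, so a perfectness gap of $e_1$ at a scale below that threshold remains a gap for $D$. Without these verifications (or an equivalent lemma), the density claims in (1) and (3) are not established.
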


\begin{ac}
The author 
would like to thank 
Professor Koichi Nagano for his advice and constant encouragement. 
\end{ac}

\section{The doubling property}

The following is known as 
the McShane--Whitney extension theorem. 
\begin{thm}\label{thm:mw}Let $l\in (0, \infty)$. 
Let $(X, d)$ be a metric space, 
and let $A$ be a subset of $X$. 
Then, 
for every $l$-Lipschitz map $f: A\to \rr$, 
there exists an $l$-Lipschitz map 
$F: X\to \rr$ 
such that 
$F|_A=f$. 
Moreover,  
 every $l$-Lipschitz map $f$ from
  $(A, d|_{A^2})$ into $\rr^n$ 
  with 
$\ell^{\infty}$-norm 
can be extended to 
an $l$-Lipschitz map from $(X, d)$ into $\rr^n$ 
with $\ell^{\infty}$-norm. 
\end{thm}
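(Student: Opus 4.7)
The plan is to prove the scalar case by an explicit formula and then deduce the vector case coordinate-wise. For the scalar part, I would define the extension
\[
F(x)=\inf_{a\in A}\bigl(f(a)+l\cdot d(x,a)\bigr),
\]
the classical McShane formula. First I need to check that $F$ is well-defined as a real number, which means verifying the infimum is finite; this follows by fixing any $a_{0}\in A$ and using the $l$-Lipschitz bound to show $f(a)+l\cdot d(x,a)\ge f(a_{0})-l\cdot d(a,a_{0})+l\cdot d(x,a)\ge f(a_{0})-l\cdot d(x,a_{0})$ for every $a\in A$.

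Next I would verify that $F$ extends $f$. For $x\in A$, taking $a=x$ in the infimum gives $F(x)\le f(x)$, while for any $a\in A$ the $l$-Lipschitz property of $f$ gives $f(a)+l\cdot d(x,a)\ge f(a)+|f(x)-f(a)|\ge f(x)$, so $F(x)\ge f(x)$. Then I would check that $F$ is $l$-Lipschitz: for $x,y\in X$ and any $a\in A$, the triangle inequality yields
\[
F(x)\le f(a)+l\cdot d(x,a)\le f(a)+l\cdot d(y,a)+l\cdot d(x,y),
\]
and passing to the infimum over $a$ gives $F(x)-F(y)\le l\cdot d(x,y)$; the symmetric bound completes the argument.

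For the vector-valued statement, I would write $f=(f_{1},\dots,f_{n})$ with each component $f_{i}\colon A\to \rr$. Since the $\ell^{\infty}$-norm dominates each coordinate, $|f_{i}(a)-f_{i}(b)|\le \|f(a)-f(b)\|_{\infty}\le l\cdot d(a,b)$, so each $f_{i}$ is $l$-Lipschitz. Applying the scalar case separately to each $f_{i}$ yields $l$-Lipschitz extensions $F_{i}\colon X\to\rr$, and I would set $F=(F_{1},\dots,F_{n})$; because $\|F(x)-F(y)\|_{\infty}=\max_{i}|F_{i}(x)-F_{i}(y)|\le l\cdot d(x,y)$, this $F$ is the desired $\ell^{\infty}$-$l$-Lipschitz extension.

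There is no serious obstacle here, as this is a classical construction; the only subtle point worth care is the well-definedness of the infimum on non-bounded metric spaces, which must be justified before anything else. The rest is essentially bookkeeping with the triangle inequality and the reduction to the scalar case via the special structure of the $\ell^{\infty}$-norm (which is crucial—the same coordinate-wise trick would fail for, e.g., the Euclidean norm without picking up a factor of $\sqrt{n}$).
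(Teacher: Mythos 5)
Your proof is correct and complete: the McShane infimum formula $F(x)=\inf_{a\in A}(f(a)+l\,d(x,a))$, with the finiteness check, the verification that $F$ extends $f$ and is $l$-Lipschitz, and the coordinate-wise reduction for the $\ell^{\infty}$-valued case, is exactly the standard argument. The paper itself states this theorem as a known result (the McShane--Whitney extension theorem) and gives no proof, so there is nothing to compare against; your observation that the coordinate-wise trick works precisely because the $\ell^{\infty}$-norm is a maximum of coordinates (and would lose a constant for the Euclidean norm) is the right point to flag and is indeed why the paper works with $\ell^{\infty}$ in Theorem \ref{thm:denseIso}.
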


\begin{thm}\label{thm:denseIso}
Let $X$ be a metrizable compact  finite-dimensional space. 
Let $Q$ be the set of all metrics  $d\in \met(X)$ for which 
$(X, d)$ is isometrically  embeddable 
into a Euclidean space equipped with 
$\ell^{\infty}$-norm. 
Then  
the set $Q$ is dense in $(\met(X), \mathcal{D}_X)$. 
\end{thm}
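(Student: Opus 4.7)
Given $d \in \met(X)$ and $\epsilon > 0$, the plan is to construct $e \in Q$ with $\mathcal{D}_X(d, e) \le \epsilon$ by pulling back the $\ell^{\infty}$-norm under a single map $\Phi\colon X \to \rr^N$ obtained as the juxtaposition of two ingredients: a near-isometric Kuratowski-type map recording distances to a finite net, and an auxiliary topological embedding into a Euclidean space of small diameter that enforces injectivity of the combined map.

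For the near-isometric part, I would use compactness to fix a finite $(\epsilon/4)$-net $\{x_1, \dots, x_k\} \subset X$ and define $\phi\colon X \to \rr^k$ by $\phi(x)_i = d(x, x_i)$. Equivalently, the map $x_i \mapsto (d(x_i, x_j))_{j=1}^k$ is an isometric embedding of the net into $(\rr^k, \ell^{\infty})$, and Theorem \ref{thm:mw} extends each coordinate to a $1$-Lipschitz function on $X$, producing $\phi$. A triangle-inequality calculation, using that every $x \in X$ lies within $\epsilon/4$ of the net, yields $d(x, y) - \epsilon/2 \le \|\phi(x) - \phi(y)\|_{\infty} \le d(x, y)$ for all $x, y \in X$.

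For the auxiliary part, finite-dimensionality enters via the Menger--N\"obeling embedding theorem: writing $m = \dim X < \infty$, there is a topological embedding $\psi\colon X \to \rr^{2m+1}$, and after rescaling I may assume the $\ell^{\infty}$-diameter of $\psi(X)$ is at most $\epsilon/2$. Putting $\Phi = \phi \oplus \psi\colon X \to \rr^{k + 2m + 1}$ with the $\ell^{\infty}$-norm and $e(x, y) = \|\Phi(x) - \Phi(y)\|_{\infty}$, I would verify: injectivity of $\psi$ forces injectivity of $\Phi$, so $e$ is a metric; continuity of $\Phi$ together with compactness of $X$ makes $\Phi$ a homeomorphism onto its image, so $e$ generates the topology of $X$ and $(X, e)$ embeds isometrically into $(\rr^{k+2m+1}, \ell^{\infty})$; finally, $e(x, y) = \max(\|\phi(x) - \phi(y)\|_{\infty}, \|\psi(x) - \psi(y)\|_{\infty})$ with the first term in $[d(x, y) - \epsilon/2, d(x, y)]$ and the second at most $\epsilon/2$, giving $|e(x, y) - d(x, y)| \le \epsilon/2$.

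The main obstacle, and the only substantive external input, is securing the injective auxiliary $\psi$ into a fixed-dimensional Euclidean space: $\phi$ alone is in general only a pseudometric embedding, and it is precisely the finite-dimensionality hypothesis on $X$ that, via Menger--N\"obeling, supplies the embedding needed to restore injectivity.
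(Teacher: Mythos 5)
Your proposal is correct and follows essentially the same route as the paper: a near-isometric $1$-Lipschitz map built from a finite net via Kuratowski/McShane--Whitney, combined by an $\ell^{\infty}$-maximum with a small-diameter topological embedding supplied by finite-dimensionality, and the pullback metric is then uniformly close to $d$ and isometrically embeddable. The only differences are cosmetic bookkeeping: you use the explicit distance-to-net-points map and a two-sided estimate (getting $\epsilon/2$ without needing the net to be $\epsilon$-separated), whereas the paper uses a separated net to force $D(p_i,p_j)=d(p_i,p_j)$ and settles for a $4\epsilon$ bound.
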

\begin{proof}
In this proof, 
let $L_{N}$ denote the metric induced from the 
$\ell^{\infty}$-norm 
on $\rr^N$ for all $N\in \zz_{\ge 1}$. 
Put $n=\dim (X)$.  
Take arbitrary $d\in \met(X)$ 
and  
$\epsilon\in (0, \infty)$. 
Since $X$ is compact, 
there exists a finite sequence  $P=\{p_i\}_{i=1}^m$  
in 
$X$ 
such that 
$\alpha_d(P)\ge \epsilon$
and 
$X=\bigcup_{i=1}^mU(p_i, \epsilon)$. 
By the Kuratowski embedding theorem 
(see \cite{H2001}), 
there exists an isometric embedding from 
$(P, d|_{P^2})$ 
into 
$(\rr^m, L_m)$. 
Thus, 
by Theorem \ref{thm:mw},  
there exists a $1$-Lipschitz map 
$F: X\to \rr^m$ 
such that 
$F|_{P}$ is isometry. 
By \cite[Theorem 9.6]{Dimtheo}, 
there exists 
a topological embedding  $I: X\to \rr^{2n+1}$. 
Since 
$\rr^{2n+1}$ is 
homeomorphic to its open ball with radius  $\epsilon/2$, 
we may assume 
that 
$\delta_{L_{2n+1}}(I(X))<\epsilon$. 
Define 
$D\in \met(X)$ by 
$D(x, y)=L_{m}(F(x), F(y))\lor L_{2n+1}(I(x), I(y))$. 
Then 
the map 
$E:(X, D)\to  (\rr^{m+2n+1}, L_{m+2n+1})$ 
defined by 
$E(x)=(F(x), I(x))$ 
is an isometric embedding. 
Since $F|_{P}$ is an isometric embedding, 
and since 
$\alpha_{d}(P)\ge \epsilon$, 
we have $D(p_i, p_j)=d(p_i, p_j)$. 
Since $F$ is $1$-Lipschitz,  for every $x\in X$, 
the inequality $d(x, p_i)< \epsilon$ implies $D(x, p_i)< \epsilon$. 
For all  $x, y\in X$, 
take $p_i, p_j$ with $d(x, p_i)<\epsilon$ 
and $ d(y, p_j)<\epsilon$. 
Then, 
\begin{align*}
|D(x, y)-d(x, y)|\le d(x, p_i)+d(y, p_j)+D(x, p_i)+D(y, p_j)< 4\epsilon. 
\end{align*}
Thus,  
we conclude that $Q$ is dense in 
$(\met(X), \mathcal{D}_X)$. 
\end{proof}

The following is 
the Hausdorff metric  extension theorem \cite{Ha1930}:
\begin{thm}\label{thm:Haus}
Let $X$ be a metrizable space, 
and $A$ a closed subset of $X$. 
Then for every $d\in \met(A)$, 
there exists $D\in \met(X)$ with 
$D|_{A^2}=d$. 
\end{thm}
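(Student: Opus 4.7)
My plan is to construct the extension $D$ by first isometrically embedding $(A,d)$ into a Banach space, then extending the embedding continuously to all of $X$ via a Dugundji-type extension, and finally adding a correction pseudometric that vanishes on $A\times A$ but restores compatibility with the topology of $X$.

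Assuming $A\neq \emptyset$ (otherwise any $D\in \met(X)$ works), fix a basepoint $a_0\in A$ and consider the Kuratowski-type map $\iota\colon A\to C_b(A,\rr)$ defined by $\iota(a)(z)=d(z,a)-d(z,a_0)$. The reverse triangle inequality gives $|\iota(a)(z)|\le d(a,a_0)$, so $\iota(a)\in C_b(A,\rr)$, and the supremum in $\|\iota(a)-\iota(b)\|_{\infty}=\sup_{z\in A}|d(z,a)-d(z,b)|$ is attained at $z=a$, yielding $d(a,b)$; hence $\iota$ is an isometric embedding into the Banach space $C_b(A,\rr)$. Since $X$ is metrizable (hence paracompact) and $A$ is closed, Dugundji's simultaneous extension theorem provides a continuous map $\Phi\colon X\to C_b(A,\rr)$ with $\Phi|_A=\iota$. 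Pulling back the norm gives a continuous pseudometric $\sigma(x,y):=\|\Phi(x)-\Phi(y)\|_{\infty}$ on $X$ with $\sigma|_{A^2}=d$.

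Next, to upgrade $\sigma$ to a compatible metric, I would fix any $e\in \met(X)$ and construct a correction pseudometric $\tau$ on $X$ with $\tau|_{A\times A}\equiv 0$. A natural first piece is the pseudometric $|e(x,A)-e(y,A)|$, which vanishes on $A\times A$ and separates points of $A$ from points outside $A$. To distinguish pairs of points in $X\setminus A$, I would use a $\sigma$-discrete basis $\{U_{n,\alpha}\}_{n\in\nn,\,\alpha\in J_n}$ of $X$ (available by Bing's metrization theorem), together with Urysohn-type functions $\phi_{n,\alpha}\colon X\to [0,1]$ supported in $U_{n,\alpha}\setminus A$, and set $\tau(x,y)=|e(x,A)-e(y,A)|+\sum_{n}2^{-n}\sup_{\alpha\in J_n}|\phi_{n,\alpha}(x)-\phi_{n,\alpha}(y)|$, where the inner supremum is a pseudometric because at each $x$ at most one $\phi_{n,\alpha}$ is nonzero within a discrete family. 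Then $D:=\sigma+\tau$ satisfies $D|_{A^2}=d$ and, by construction, generates the ambient topology of $X$.

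The main obstacle is ensuring the correction $\tau$ is genuinely a pseudometric while simultaneously vanishing on $A\times A$ and metrizing $X\setminus A$ as well as the transition to $A$. The triangle inequality forces one to build $\tau$ from differences of scalar functions rather than from natural but incompatible multiplicative cut-offs such as $e(x,y)\cdot\min(e(x,A),e(y,A))$, which do vanish on $A\times A$ but generally fail the triangle inequality. The delicate calibration is to choose the Urysohn functions so that convergence in $D$ at a point $a\in A$ implies both $e(x_{\lambda},A)\to 0$ and identification of $a$ as the limit through $\iota$; this is where the interplay between $\Phi$ and the discretely-indexed $\phi_{n,\alpha}$ becomes essential.
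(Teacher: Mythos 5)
The paper does not prove this statement at all---it is quoted as Hausdorff's classical extension theorem---so your attempt can only be judged on its own terms, and as written it has a genuine gap at exactly the point you flag but do not resolve: the verification that $D=\sigma+\tau$ generates the topology of $X$ \emph{at points of $A$}. Every ingredient of your correction $\tau$ (the term $|e(x,A)-e(y,A)|$ and the functions $\phi_{n,\alpha}$, which are continuous and vanish on $A$) is small on a neighborhood of \emph{every} point of $A$, not just near the point $a$ under consideration; hence for any open $U\ni a$ with $A\setminus U\neq\emptyset$, the infimum of $\tau(a,y)$ over $y\notin U$ is $0$, no matter how the Urysohn functions are calibrated. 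So the neighborhood-base property at $a$ must come entirely from $\sigma$, i.e.\ from $\Phi$, and bare continuity of $\Phi$ (even with values in the convex hull of $\iota(A)$, which is all Dugundji's theorem asserts) is not enough. Concretely: take $X=\rr$, $A=\zz_{\ge 0}$, $d(m,n)=1$ for $m\neq n$, basepoint $a_0=0$, and define a continuous extension $\Phi$ of $\iota$ which is piecewise linear on each $[n,n+1]$ with $\Phi(n+\tfrac1n)=\iota(0)$. This $\Phi$ satisfies the full conclusion of Dugundji's theorem, yet for $z_n=n+\tfrac1n$ one gets $\sigma(0,z_n)=0$ and (for the natural choice $\phi_{n,\alpha}=\min(1,e(\cdot,X\setminus(U_{n,\alpha}\setminus A)))$, which is dominated by $e(\cdot,A)$) also $\tau(0,z_n)\to 0$, so $D(0,z_n)\to 0$ while $z_n\to\infty$: $D$ fails to generate the topology of $X$. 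Thus ``by construction, generates the ambient topology'' is not justified, and the closing suggestion that the fix lies in the choice of the $\phi_{n,\alpha}$ points in the wrong direction.

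The approach is salvageable, but only by invoking a finer property of the \emph{explicit} Dugundji construction, not merely its statement: for $y\notin A$ one has $\Phi(y)\in\operatorname{conv}\{\iota(z)\mid z\in A,\ e(z,y)\le C\, e(y,A)\}$ for a universal constant $C$. Granting this, the argument at $a\in A$ goes through by evaluating at the coordinate $a$: if $y\notin U\supseteq B_e(a,r)$ and $e(y,A)$ is small, all anchor points $z_i$ satisfy $e(z_i,a)\ge r/2$, hence $d(z_i,a)\ge\delta_0>0$ by continuity of the identity $(A,d)\to(A,e|_{A^2})$ at $a$, and then $\|\Phi(y)-\iota(a)\|_\infty\ge\sum_i\lambda_i d(a,z_i)\ge\delta_0$. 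With that estimate added (and your treatment of points of $X\setminus A$, which is fine once the $\phi_{n,\alpha}$ are pinned down), the proof is complete; this is essentially Toru\'nczyk's short proof of Hausdorff's theorem, whereas other standard routes (Hausdorff's original formula, Bing's proof) avoid the extension operator altogether.
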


By Corollary 4.4 and Proposition 4.9 in \cite{Ishiki2020int}, 
we obtain: 
\begin{lem}\label{lem:doublingfsigma}
For a metrizable spaces $X$, 
the set of all doubling metrics in $\met(X)$ is $F_{\sigma}$
in $(\met(X), \mathcal{D}_X)$. 
\end{lem}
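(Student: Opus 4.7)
The plan is to express the set of doubling metrics as a countable union of subsets of $\met(X)$, each cut out by a doubling inequality with fixed rational parameters, and then to verify that each of these sets is closed in $(\met(X), \mathcal{D}_X)$.

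First I would reduce the parameter range to rationals. For any finite $A \subseteq X$ with $\card(A) \ge 2$, we have $\delta_d(A)/\alpha_d(A) \ge 1$, so enlarging either $\beta$ or $C$ in the doubling inequality $\card(A) \le C(\delta_d(A)/\alpha_d(A))^\beta$ only weakens it. Hence a metric $d \in \met(X)$ is doubling if and only if the inequality holds for some pair
\[
(\beta, C) \in (\qq \cap (0,\infty)) \times (\qq \cap [1,\infty)).
\]
For each such rational pair, let $D(\beta, C)$ denote the set of $d \in \met(X)$ for which the inequality above holds for every finite $A \subseteq X$. Then the set of doubling metrics equals $\bigcup_{(\beta,C)} D(\beta, C)$, which is a countable union.

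Next I would show that each $D(\beta,C)$ is closed in $(\met(X),\mathcal{D}_X)$. Take a sequence $\{d_n\}$ in $D(\beta,C)$ converging to $d \in \met(X)$. Convergence in $\mathcal{D}_X$ is uniform convergence of the distance function on $X^2$, so in particular $d_n(x,y) \to d(x,y)$ for each pair $x, y$ in any fixed finite $A \subseteq X$. Since $A$ is finite, this yields $\delta_{d_n}(A) \to \delta_d(A)$ and $\alpha_{d_n}(A) \to \alpha_d(A)$; note that $\alpha_d(A) > 0$ because $d$ is a genuine metric and $A$ is finite. Passing to the limit in the inequality $\card(A) \le C(\delta_{d_n}(A)/\alpha_{d_n}(A))^\beta$ gives the corresponding inequality for $d$, and the cases $\card(A)\le 1$ are trivial. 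Thus $d \in D(\beta, C)$, so $D(\beta,C)$ is closed.

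There is no substantial obstacle here: the only delicate points are observing that the rational reduction really captures every doubling metric (thanks to $\delta_d/\alpha_d \ge 1$), and that the functionals $d \mapsto \delta_d(A)$ and $d \mapsto \alpha_d(A)$ are continuous on $(\met(X),\mathcal{D}_X)$ for every fixed finite $A$. Both are immediate from the definition of $\mathcal{D}_X$, so combining the two observations above exhibits the doubling metrics as $F_\sigma$ in $(\met(X),\mathcal{D}_X)$.
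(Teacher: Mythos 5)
Your proof is correct. Note, though, that the paper does not prove this lemma directly at all: it simply cites Corollary 4.4 and Proposition 4.9 of \cite{Ishiki2020int}, where the doubling property is treated as an instance of a \emph{transmissible property}, and the general machinery there yields that the set of metrics satisfying such a property is $F_{\sigma}$ in $(\met(X), \mathcal{D}_X)$. What you have written is essentially the concrete instantiation of that machinery for the doubling condition: parametrize the defining inequality $\card(A)\le C(\delta_d(A)/\alpha_d(A))^{\beta}$ by countably many pairs $(\beta, C)\in(\qq\cap(0,\infty))\times(\qq\cap[1,\infty))$ (legitimate because $\delta_d(A)/\alpha_d(A)\ge 1$ on sets with at least two points, so enlarging the parameters only weakens the condition), and check that each parameter class is closed because, for a fixed finite $A$, the maps $d\mapsto\delta_d(A)$ and $d\mapsto\alpha_d(A)$ are continuous for uniform convergence of metrics and $\alpha_d(A)>0$ for a genuine metric. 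Your route is more elementary and self-contained, which is a virtue if one only cares about doubling; the paper's route buys uniformity, since the same cited results also give the analogous $F_{\sigma}$ statements for uniform disconnectedness (Lemma 3.6 of the paper) and for the ultrametric setting, without redoing the argument each time.
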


\begin{proof}[Proof of Theorem \ref{thm:dense:D}]
Let $T$ be the the set of all doubling metrics in $\met(X)$. 
Assume first 
that  
$X$ is a metrizable compact finite-dimensional space. 
Since all metric subspaces of the Euclidean spaces are doubling, 
$T$ contains 
the set  $Q$ stated in Theorem \ref{thm:denseIso}. 
Then, 
by Theorem \ref{thm:denseIso} and Lemma \ref{lem:doublingfsigma},  
the set $T$ is dense $F_{\sigma}$ in $\met(X)$. 
We next prove the opposite. 
If there exists a doubling metric in $\met(X)$, 
then, 
by the Assouad embedding theorem 
(see \cite[Theorem 12.1]{H2001}),  $X$ is finite-dimensional. 
For the sake of contradiction, 
suppose that $X$ is not compact. 
Then, 
there exists a countable  closed discrete subspace $F$ of $X$. 
Let $e$ be the metric on $F$ 
such that 
$e(x, y)=1$ whenever $x\neq y$. 
By Theorem \ref{thm:Haus}, 
there exists $D\in \met(X)$ with $D|_{F^2}=e$. 
Let $U$ be 
the  open ball centered at $D$ 
with radius $1/2$ in 
$(\met(X), \mathcal{D}_X)$. 
Take $d\in U$. 
Then,  
for every finite subset $A$ of $F$, 
we have 
$\delta_D(A)-1/2\le \delta_d(A)$ and 
$\alpha_d(A)\le \alpha_{D}(A)+1/2$. 
Since $1\le \alpha_D(A)$ and $1\le \delta_{D}(A)$, 
we have $\delta_D(A)/2\le \delta_d(A)$  
and 
$\alpha_d(A)\le 2\alpha_{D}(A)$. 
Since $D$ is not doubling, 
for every $C\in [1, \infty)$, 
and 
for every $\beta\in (0, \infty)$, 
there
exists a finite subset $B$ of $F$ 
with
$4C\cdot \left(\delta_D(B)/\alpha_D(B)\right)^{\beta}<\card(B)$, 
and hence we obtain 
$C\cdot \left(\delta_d(B)/\alpha_d(B)\right)^{\beta}<\card(B)$. 
Then  
$d$ is not doubling. 
Thus, 
the open set $U$ consists of non-doubling metrics, 
and 
hence the set $T$ is not dense in $\met(X)$. 
This finishes the proof. 
\end{proof}

\section{Amalgamation lemmas}

We provide new amalgamation lemmas of metrics and ultrametrics. 

\begin{prop}\label{prop:amal}
Let $I$ be a set. 
Let $(X, d)$ be a metric space. 
Let $\{B_i\}_{i\in I}$ be a covering  of $X$  
consisting of  mutually disjoint  clopen  subsets, 
and 
let $\{p_i\}_{i\in I}$ be points with $p_i\in B_i$. 
Let $\{e_i\}_{i\in I}$ be 
a set of metrics such that 
$e_i\in \met(B_i)$. 
Define a function $D:X^2\to [0, \infty)$ by 
\[
D(x, y)=
\begin{cases}
e_i(x, y) & \text{ if $x, y\in B_i$;}\\
e_i(x, p_i)+d(p_i, p_j)+e_j(p_j, y)  & \text{if $x\in B_i$ and $y\in B_j$.}
\end{cases}
\]
Then 
$D\in \met(X)$ 
and 
$D|_{B_i^2}=e_i$ 
for all $i\in I$. 
Moreover, 
if for every $i\in I$ 
we have 
$\delta_{d}(B_i)\le \epsilon$
 and 
 $\delta_{e_i}(B_i)\le \epsilon$, 
 then 
$\mathcal{D}_{X}(D, d)\le 4\epsilon$. 
\end{prop}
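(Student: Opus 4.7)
The plan is to verify in order that (i) $D$ is a metric on $X$, (ii) $D$ induces the given topology on $X$ (so $D\in\met(X)$), (iii) $D|_{B_i^2}=e_i$, and (iv) the diameter estimate $\mathcal{D}_X(D,d)\leq 4\epsilon$ holds under the stated hypothesis. Clause (iii) is immediate from the first case of the definition of $D$, so the real content lies in the other three.

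For (i), nonnegativity and symmetry are clear. The separation axiom uses disjointness of the pieces: if $x\in B_i$ and $y\in B_j$ with $i\neq j$, then $p_i\neq p_j$, so $D(x,y)\geq d(p_i,p_j)>0$. The triangle inequality is a case analysis on how $x,y,z$ are distributed among the pieces. Three points in a single $B_i$ reduces to the triangle inequality for $e_i$; the two ``mixed'' configurations (two points in one piece and one in another) reduce to writing out the definition and using the triangle inequality for $e_i$ to combine the ``interior'' terms; the case of three distinct pieces $B_i, B_j, B_k$ is the most involved and uses both the triangle inequality for $d$ (to combine $d(p_i,p_j)+d(p_j,p_k)\geq d(p_i,p_k)$) and the discarding of a nonnegative $2e_j(p_j,y)$ term that appears on the larger side.

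For (ii), I will show each $B_i$ is open in $(X,D)$; since $D|_{B_i^2}=e_i\in\met(B_i)$ already induces the correct topology on $B_i$, this suffices. Fix $x\in B_i$. Because $B_i$ is open in $(X,d)$, there exists $r>0$ such that the $d$-ball of radius $r$ around $p_i$ is contained in $B_i$, and hence $d(p_i,p_j)\geq r$ for every $j\neq i$. For any $y\in B_j$ with $j\neq i$, the definition of $D$ yields $D(x,y)\geq d(p_i,p_j)\geq r$, so the $D$-ball of radius $r$ around $x$ is contained in $B_i$. I expect this to be the main obstacle: the clopenness of the $B_i$ is essential here, since without it the points $p_j$ could accumulate at $p_i$ and $D$ could fail to recover the topology of $X$.

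For (iv), I will split according to whether $x,y$ lie in the same piece. If $x,y\in B_i$ then both $D(x,y)$ and $d(x,y)$ lie in $[0,\epsilon]$, giving $|D(x,y)-d(x,y)|\leq\epsilon$. If $x\in B_i$ and $y\in B_j$ with $i\neq j$, I compare both quantities to the reference value $d(p_i,p_j)$: the definition of $D$ together with the diameter hypotheses gives $|D(x,y)-d(p_i,p_j)|\leq 2\epsilon$, while the triangle inequality for $d$ with $d(x,p_i),d(y,p_j)\leq\epsilon$ gives $|d(x,y)-d(p_i,p_j)|\leq 2\epsilon$. Adding these yields the claimed bound. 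Outside step (ii), everything else is a routine unwinding of the definition of $D$.
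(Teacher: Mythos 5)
Your proposal is correct and follows essentially the same route as the paper: the same case analysis for the triangle inequality, the observation that the disjoint clopen pieces carry the topology (your explicit $D$-ball argument just spells out the paper's appeal to the disjoint-union topology), and a $4\epsilon$ estimate obtained by comparing $D(x,y)$ and $d(x,y)$ through the base points $p_i,p_j$. No gaps worth noting.
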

\begin{proof}
We first show 
that 
$D$ satisfies the triangle inequality. 
Take distinct $i, j, k\in I$, 
and take $x, y, z\in X$. 
In the case of $x, y\in B_i$ and $z\in B_j$,  
we have 
$D(x, y)=e_i(x, y)\le e_i(x, p_i)+e_i(p_i, y)\le D(x, z)+D(z, y)$. 
In the case of $x\in B_i$ and $y, z\in B_j$, 
we have 
\begin{align*}
D(x, y)&=e_i(x, p_i)+d(p_i, p_j)+e_j(p_j, y) \\
&\le e_i(x, p_i)+d(p_i, p_j)+e_j(p_j, z) +e_i(z, y)=
D(x, z)+D(z, y).
\end{align*}
In the case of $x\in B_i$,  $y\in B_j$ and $z\in B_k$,  
we have 
\begin{align*}
D(x, y)&=e_i(x, p_i)+d(p_i, p_j)+e_j(p_j, y) \\
&\le e_i(x, p_i)+d(p_i, p_k)+d(p_k, p_j)+e_j(p_j, z) +e_i(z, y)\\
&\le 
D(x, z)+D(z, y).
\end{align*}
Since $i, j, k$ and $x, y, z$ are arbitrary, 
we conclude that $D$ satisfies the triangle inequality. 
Since $\{B_i\}_{i\in I}$ is a disjoint family of clopen subsets, 
 $X$ is homeomorphic to 
 the disjoint union space induced from  
 $\{B_i\}_{i\in I}$. 
 Thus, 
 $D\in \met(X)$. 
 We next prove the latter part. 
 Take $x, y\in X$. 
 If $x, y\in B_i$, 
 then, 
by the assumption, 
we have $|D(x, y)-d(x, y)|\le 2\epsilon$.
If $x\in B_i$ and $ y\in B_j$ for some $i, j\in I$ with  $i\neq j$, 
then we have 
\begin{align*}
|D(x, y)-d(x, y)|&\le D(x, p_i)+D(p_j, y)+d(x, p_i)+d(y, p_j)\\
&\le  
\delta_d(B_i)+\delta_{e_i}(B_i)+\delta_d(B_j)+\delta_{e_j}(B_j)\le 4\epsilon.
\end{align*}
This completes the proof. 
\end{proof}

By replacing the symbol 
``$+$'' with ``$\lor$''  
in the proof of Proposition \ref{prop:amal}, 
we obtain the following proposition:
\begin{prop}\label{prop:amalult}
Let $I$ be a set. 
Let $S$ be a range set. 
Let $(X, d)$ be an $S$-valued  ultrametric space. 
Let $\{B_i\}_{i\in I}$ be a covering of $X$ 
consisting   of mutually disjoint  clopen  subsets, 
and let $\{p_i\}_{i\in I}$ be points with $p_i\in B_i$. 
Let $\{e_i\}_{i\in I}$ be a set of ultrametrics with 
$e_i\in \ult(B_i, S)$. 
Define a function  $D:X^2\to [0, \infty)$ by 
\[
D(x, y)=
\begin{cases}
e_i(x, y) & \text{ if $x, y\in B_i$;}\\
e_i(x, p_i)\lor d(p_i, p_j)\lor e_j(p_j, y)  & \text{if $x\in B_i$ and $y\in B_j$.}
\end{cases}
\]
Then $D\in \ult(X, S)$ and $D|_{B_i^2}=e_i$ for all $i\in I$. Moreover, 
if for every $i\in I$ 
we have 
$\delta_{d}(B_i)\le \epsilon$
and 
$\delta_{e_i}(B_i)\le \epsilon$, 
then 
$\mathcal{UD}_{X}^S(D, d)\le \epsilon$. 
\end{prop}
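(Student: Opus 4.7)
The strategy is to follow the proof of Proposition~\ref{prop:amal} verbatim, uniformly replacing the operator ``$+$'' by ``$\lor$''. The three tasks are to verify that $D$ is an $S$-valued ultrametric compatible with the topology of $X$, that $D|_{B_i^2}=e_i$ (which is immediate from the definition), and that the diameter bound holds.

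For the strong triangle inequality $D(x,y)\le D(x,z)\lor D(z,y)$, I would carry out the same case analysis on which of the blocks $B_i, B_j, B_k$ contain $x,y,z$ as in Proposition~\ref{prop:amal}. When all three points lie in a common block, the inequality is that of $e_i$. When two points lie in the same block and one is in another, one invokes the strong triangle inequality of the appropriate $e_i$ to absorb a missing term; for instance, in the case $x\in B_i$ and $y,z\in B_j$, one uses $e_j(p_j,y)\le e_j(p_j,z)\lor e_j(z,y)$. In the three-distinct-block case one additionally uses the ultrametric inequality $d(p_i,p_j)\le d(p_i,p_k)\lor d(p_k,p_j)$ for $d$ itself. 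That $D$ is $S$-valued is immediate since $\max\{a,b\}\in\{a,b\}\subseteq S$ whenever $a,b\in S$, and each of $e_i$ and $d$ is $S$-valued. Topological compatibility reduces, via the homeomorphism $X\cong\bigsqcup_{i\in I}B_i$ induced by the clopen partition, to showing that sufficiently small $D$-balls around $x\in B_i$ remain in $B_i$; this follows from $D(x,y)\ge d(p_i,p_j)\ge d(p_i,X\setminus B_i)>0$ for any $y\in B_j$ with $j\ne i$, where positivity uses that $B_i$ is $d$-open and $d$ is compatible with the topology.

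For the diameter estimate, I would split into the same two cases as in Proposition~\ref{prop:amal}. If $x,y$ lie in a common $B_i$, then both $D(x,y)\le \delta_{e_i}(B_i)\le\epsilon$ and $d(x,y)\le\delta_d(B_i)\le\epsilon$. If $x\in B_i$ and $y\in B_j$ with $i\ne j$, the strong triangle inequality for $d$ yields $d(p_i,p_j)\le d(p_i,x)\lor d(x,y)\lor d(y,p_j)\le\epsilon\lor d(x,y)$, while the definition of $D$ gives $d(p_i,p_j)\le D(x,y)\le\epsilon\lor d(p_i,p_j)$. Chaining these produces both $D(x,y)\le d(x,y)\lor\epsilon$ and $d(x,y)\le D(x,y)\lor\epsilon$, whence $\mathcal{UD}_X^S(D,d)\le\epsilon$.

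There is no serious obstacle; the only point requiring a bit of care is the topology-compatibility of $D$ when $I$ is infinite and the basepoints $p_i$ may cluster, which the clopenness hypothesis on the $B_i$ together with compatibility of $d$ handles precisely. Notice that the bound sharpens from $4\epsilon$ in Proposition~\ref{prop:amal} to $\epsilon$ here, which is the natural payoff of replacing sums by maxima.
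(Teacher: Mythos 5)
Your proof is correct and follows exactly the route the paper takes: the paper derives this proposition by replacing ``$+$'' with ``$\lor$'' in the proof of Proposition \ref{prop:amal}, which is precisely what you carry out, with the cross-block estimate $d(p_i,p_j)\le d(p_i,x)\lor d(x,y)\lor d(y,p_j)$ correctly yielding the sharpened bound $\mathcal{UD}_X^S(D,d)\le\epsilon$. You in fact supply slightly more detail than the paper (the $S$-valuedness and the positivity $D(x,y)\ge d(p_i, X\setminus B_i)>0$ ensuring topological compatibility), and these added details are accurate.
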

\begin{rmk}
Let $X$ be a metrizable space,  
let $A$ be a closed subset of $X$, 
and 
let $\{B_i\}_{i\in I}$ be 
a covering of mutually disjoint clopen subsets of $X$. 
The Hausdorff metric  extension theorem 
(Theorem \ref{thm:Haus}) states 
that
 a metric defined on the  squared set  $A^2$ 
 can be extended to a metric defined on $X^2$ 
 (see also Theorem \ref{thm:ultext}). 
 On the other hand, 
Proposition \ref{prop:amal} states 
that 
a metric defined on the set $\coprod_{i\in I}B_i^2$ 
can be extended to 
a metric defined on $X^2$. 
Note that the set  $\coprod_{i\in I}B_i^2$
is not a squared subset of $X^2$ in general. 
Dovgoshey--Martio--Vuorinen \cite{DMV2013} found 
the necessary and sufficient condition 
under which a weight of a weighted graph can be extended to 
a pseudometric on the vertex set  of the graph. 
Dovgoshey--Petrov \cite{DP2013} prove an ultrametric version of it. 
Propositions \ref{prop:amal} and \ref{prop:amalult} can be 
considered as a generalization of 
Dovgoshey--Martio--Vuorinen and Dovgoshey--Petrov's results. 
We also remark that
Proposition \ref{prop:amalult} is 
 a generalization of Dovgoshey--Shcherbak's construction of ultrametrics  (see the definition (4.11) in the proof of Theorem 4.7 in \cite{DV2021}). 
\end{rmk}

By the definition of the uniform perfectness, 
we obtain:
\begin{lem}\label{lem:upequiv}
A metric space $(X, d)$ is uniformly perfect 
if and only if 
there exist 
$c\in (0, 1)$ 
and 
$\delta\in (0, \infty)$ 
such that 
for every 
$x\in X$ 
and 
for every 
$r\in (0, \delta)$, 
there exists 
$y\in X$ 
with 
$cr\le d(x, y)\le r$. 
\end{lem}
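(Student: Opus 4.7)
The ``only if'' direction is immediate from the definition: take the same constant $c$ and let $\delta = \delta_d(X)$ if that is finite, or any fixed positive value otherwise. The stated local condition is then a formal weakening of $c$-uniform perfectness.

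For the converse, the plan is to assume the local condition with parameters $c \in (0, 1)$ and $\delta \in (0, \infty)$ and construct a single constant $c' \in (0, 1)$ witnessing uniform perfectness of $(X, d)$. I would split on the size of the radius: for $r \in (0, \delta)$ the hypothesis applies directly, yielding $y \in X$ with $cr \le d(x, y) \le r$. For the remaining range $r \in [\delta, \delta_d(X))$, I would invoke the hypothesis at the fixed smaller radius $\delta/2 \in (0, \delta)$ to obtain $y \in X$ with $c\delta/2 \le d(x, y) \le \delta/2 \le r$, and then absorb the mismatch between the actual distance $d(x, y)$ and the target radius $r$ into a uniformly smaller constant. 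Concretely, setting $c' = \min\{c,\ c\delta/(2\delta_d(X))\}$ should suffice: in the first case $c' r \le c r \le d(x, y) \le r$, and in the second case $d(x, y) \ge c\delta/2 \ge (c\delta/(2\delta_d(X)))\, r \ge c' r$, while $d(x, y) \le \delta/2 \le r$.

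The only genuine obstacle is the case $\delta_d(X) = \infty$, where the proposed $c'$ degenerates to zero and the local hypothesis alone cannot control arbitrarily large radii. Since the lemma will be applied to bounded (indeed compact) spaces such as the Cantor set in the sequel, this pathology does not arise in the intended applications; otherwise the statement should be read as restricted to spaces of finite diameter, which is the only setting in which the clean reduction above is uniform in $r$.
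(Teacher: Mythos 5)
Your proof is correct, and it is essentially the verification the paper omits: the paper derives this lemma directly ``by the definition'' with no written argument, and your split at $r=\delta$ together with the rescaled constant $c'=\min\{c,\ c\delta/(2\delta_d(X))\}$ is the natural way to make that precise. Your caveat about $\delta_d(X)=\infty$ is moreover not just a limitation of your method: the backward implication is genuinely false for unbounded spaces. For instance, $X=\bigcup_{n\ge 0}[a_n,a_n+1]\subset\rr$ with $a_0=0$ and $a_{n+1}=2^{n+1}(a_n+1)$ satisfies the local condition with $\delta=1/2$ and every $c\in(0,1)$ (each point lies in a unit interval, so all small distances are realized exactly), yet it is not uniformly perfect, because the set of distances from the point $0$ misses every interval $(a_n+1,\,a_{n+1})$ and the ratios $a_{n+1}/(a_n+1)=2^{n+1}$ are unbounded, so for any $c$ some annulus $\{y\mid cr\le d(0,y)\le r\}$ with $r\in(0,\delta_d(X))$ is empty. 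Since in this paper the lemma is only invoked (inside Lemma \ref{lem:k=123}) for metrics that ultimately live on compact, hence bounded, spaces, the finite-diameter reading you adopt is the intended one and causes no loss; it is only worth keeping in mind that Lemma \ref{lem:k=123} is stated for arbitrary metric spaces, so the bounded-diameter proviso silently accompanies it there as well.
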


Recall that 
the symbols
$\mathcal{P}_1$, 
$\mathcal{P}_2$,  
and 
$\mathcal{P}_3$ 
stand for the doubling property, 
the uniform disconnectedness, and 
the uniform perfectness, respectively. 

\begin{lem}\label{lem:k=123}
Let 
$(X, d)$ 
be a metric space. 
Let 
$\{B_i\}_{i=1}^n$ 
be a covering of $X$ 
consisting  of mutually disjoint clopen  subsets, 
and let 
$\{p_i\}_{i=1}^n$ 
be points with 
$p_i\in B_i$. 
Let 
$\{e_i\}_{i=1}^{n}$ 
be a set of metrics 
such that 
$e_i\in \met(B_i)$. 
Let  
$D$ 
be the metric constructed in Proposition \ref{prop:amal}
 from $d$ and $\{e_i\}_{i=1}^n$.  
Then for every $k\in \{1, 2, 3\}$, 
the following hold true:
\begin{enumerate}
\item If each $e_i$ satisfies $\mathcal{P}_k$, 
then so does $(X, D)$. 
\item If some $e_i$ does not satisfy $\mathcal{P}_k$, 
then neither does $(X, D)$. 
\end{enumerate}
\end{lem}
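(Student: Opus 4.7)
My plan is to split the proof into claims (1) and (2) and, within each, dispatch the three properties in turn. Two simple facts will drive everything. First, $D|_{B_i^2}=e_i$, so each $(B_i,e_i)$ sits inside $(X,D)$ as an isometric clopen subspace. Second, whenever $x\in B_i$, $y\in B_j$ and $i\neq j$, one has $D(x,y)\ge d(p_i,p_j)\ge\lambda$, where $\lambda:=\min_{i\neq j}d(p_i,p_j)>0$; dually, cross distances are controlled by $\Lambda:=\max_{i\neq j}d(p_i,p_j)$ and the individual diameters $\Delta_i:=\delta_{e_i}(B_i)$. These finite parameters, together with common versions $c_0,\beta,\delta_0$ of the local constants of the $e_i$, will be the only quantities entering the final bounds.

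Claim (2) is uniform in $k$. If $(B_i,e_i)$ fails the doubling property or uniform disconnectedness, the witnessing finite set or finite sequence lies in $B_i^2$, where $D=e_i$, and therefore witnesses failure in $(X,D)$ as well. If $(B_i,e_i)$ is not uniformly perfect, I would invoke Lemma~\ref{lem:upequiv}: given any candidate constants $c'\in(0,1)$ and $\delta'>0$ for $(X,D)$, set $\delta=\min(\delta',\lambda)$ and apply the negation of $c'$-uniform perfectness of $e_i$ at $\delta$ to produce $x\in B_i$ and $r\in(0,\lambda)$ with no $y\in B_i$ in the $D$-annulus $[c'r,r]$ around $x$; every point outside $B_i$ lies at $D$-distance $\ge\lambda>r$ from $x$, so no point of $X$ lies in the annulus either.

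For claim (1), the doubling case is a one-line computation: splitting a finite $A\subset X$ as the disjoint union of $A_i:=A\cap B_i$, observing $\alpha_D(A)\le\alpha_{e_i}(A_i)$ when $|A_i|\ge 2$, and summing the doubling estimate on each $A_i$ yields $|A|\le nC(\delta_D(A)/\alpha_D(A))^{\beta}$. The uniform disconnectedness case splits on whether a finite sequence $z_1,\dots,z_N$ stays inside one $B_i$ (the $e_i$-inequality is then inherited) or crosses blocks. In the crossing case, setting $M:=\max_k D(z_k,z_{k+1})$ and writing $i_1,i_N$ for the endpoint blocks, I would apply the UD inequality of $e_{i_1}$ to the truncated sequence $z_1,\ldots,z_{k^*-1},p_{i_1}$ (where $k^*$ is the first exit index) and its mirror for the terminal segment, obtaining $e_{i_1}(z_1,p_{i_1})$ and $e_{i_N}(p_{i_N},z_N)$ both bounded by $M/\delta_0$; since any crossing forces $M\ge\lambda$, the center hop satisfies $d(p_{i_1},p_{i_N})\le\Lambda\le(\Lambda/\lambda)M$, and the definition of $D$ then yields $D(z_1,z_N)\le(2/\delta_0+\Lambda/\lambda)M$.

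The main obstacle is uniform perfectness in claim (1). Again working via Lemma~\ref{lem:upequiv}, I would fix $x\in B_i$ and describe the distance spectrum $\{D(x,y):y\in X\}$ as a union of blocks: the inner block $[0,\Delta_i]$ coming from $y\in B_i$ and, for each $j\neq i$, an outer block $[e_i(x,p_i)+d(p_i,p_j),\;e_i(x,p_i)+d(p_i,p_j)+\Delta_j]$ coming from $y\in B_j$. The common UP constant $c_0$ of the pieces bounds multiplicative gaps within every block by $1/c_0$. The delicate step is bounding inter-block gap ratios: the inner-to-nearest-outer gap is at most $1+\Lambda/\Delta_{\min}$ with $\Delta_{\min}=\min_j\Delta_j$, and consecutive outer blocks have gap ratio at most $\Lambda/\lambda$, since $(e_i(x,p_i)+\mu_{k+1})/(e_i(x,p_i)+\mu_k)\le\mu_{k+1}/\mu_k$ for the sorted center distances $\mu_k=d(p_i,p_{j_k})$. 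Taking $c'$ equal to the reciprocal of the maximum of $1/c_0$, $1+\Lambda/\Delta_{\min}$, and $\Lambda/\lambda$ delivers uniform perfectness on $(X,D)$, at least under the mild proviso that every $B_i$ contains at least two points; this caveat cannot be dropped, since two singletons amalgamate into a two-point space, which is never uniformly perfect.
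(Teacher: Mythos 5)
Your argument is correct in substance, and parts of it (statement (2), and the doubling case of (1)) coincide with the paper's proof; but in two places you take a genuinely different route. For uniform disconnectedness in (1), the paper does not argue with chains at all: it uses the characterization of uniform disconnectedness as bi-Lipschitz embeddability into an ultrametric space (\cite[Proposition 15.7]{DS1997}), amalgamates the resulting ultrametrics on the $B_i$ via Proposition \ref{prop:amalult}, and notes that the glued map is bi-Lipschitz from $(X,D)$ onto the amalgamated ultrametric; your first-exit/last-entry chain estimate $D(z_1,z_N)\le(2/\delta_0+\Lambda/\lambda)M$ is a more elementary and quantitative substitute that avoids the ultrametric machinery (you should just also record the case where $z_1$ and $z_N$ lie in the same block but the chain leaves it, which your truncation bounds handle by the triangle inequality through $p_{i_1}$). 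For uniform perfectness in (1), the paper's proof is far shorter than your spectrum analysis: putting $m=\min_i\delta_{e_i}(B_i)>0$, it takes the constant $(1/2)\min\{c,\,m/\delta_D(X)\}$, the point being that for $r<\delta_{e_i}(B_i)$ one stays inside $B_i$ by $c$-uniform perfectness of $e_i$, while for larger $r$ a nearly diametral point of $B_i$ serves as the witness; indeed, since you invoke Lemma \ref{lem:upequiv}, you could take $\delta=m$ there and drop the outer-block bookkeeping entirely, which would also remove the small inaccuracies in your constants (the top of the inner spectrum at $x$ may be only about $\Delta_i/2$, so the inner-to-outer gap bound should be $1+2\Lambda/\Delta_{\min}$, and for $r$ exceeding the largest distance from $x$ one additionally needs $c'\le 1/2$ unless one restricts to small scales). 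Your caveat that each $B_i$ must contain at least two points is consistent with the paper rather than a discrepancy: its proof of this case uses exactly that fact to get $m>0$, even though the hypothesis is not written into the statement of the lemma.
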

\begin{proof}
We first prove the statement (1). 
In the case of $k=1$, take a finite subset $A$ of $X$, 
and 
put $A_i=A\cap B_i$ for all $i\in \{1, \dots, n\}$. 
Since each $S_i$ is doubling, 
for each $i\in \{1, \dots, n\}$ 
there exist $C_i\in (0, \infty)$
and $\beta_i$ such that 
$\card(A_i)\le C_i(\delta_{e_i}(A_i)/\alpha_{e_i}(A_i))^{\beta_i}$. 
Put $C=\max_{1\le i\le n}{C_i}$ and 
$\beta=\max_{1\le i\le n}\beta_i$, 
then 
we have 
$\card(A)\le nC(\delta_{D}(A)/\alpha_{D}(A))^{\beta}$.
This proves the statement (1) for $k=1$. 
We next deal with   the case of $k=2$. 
By the assumption and by \cite[Proposition 15.7]{DS1997}, 
 for each $i\in \{1, \dots, n\}$
 there exist an ultrametric $w_i\in \met(B_i)$ and 
 a bi-Lipschitz map  $f_i: (B_i, e_i)\to (B_i, w_i)$. 
By applying 
 Proposition \ref{prop:amalult} to 
 $\{w_i\}_{i=1}^n$, we obtain 
  an ultrametric $R\in \met(X)$. 
  Define $f: (X, D)\to (X, R)$ by 
  $f(x)=f_i(x)$ if $x\in B_i$. 
  Then, $f$ is bi-Lipschitz, 
  and hence $(X, D)$ is uniformly disconnected. 
We next prove the case of $k=3$. 
Assume that all $B_i$ are $c$-uniformly perfect for some $c\in (0, 1)$. 
Put $m=\min_{1\le i\le n}\delta_{e_i}(B_i)$. 
Since each $B_i$ possesses at least two elements, 
we have $m>0$. 
Put $C=(1/2)\min\{c, m/ \delta_D(X)\}$. 
Then, 
$(X, D)$ is $C$-uniformly perfect. 
\par

We now prove the statement (2) in the lemma. 
Since the doubling property and uniform disconnectedness 
are hereditary to all metrics subspaces, 
the statements (2) for $k=1,  2$ are true. 
We now treat the case of $k=3$. 
Take  $c\in (0, 1)$. 
Put  $\delta=\min\{\, d(p_i, p_j)\mid i\neq j\, \}$. 
We may assume that  $e_1$ is   not uniformly perfect. 
Then, by Lemma \ref{lem:upequiv}, 
there exist $x\in B_1$ and $r\in (0, \delta)$ such that 
for all $y\in B_1$ we have 
$e_1(x, y)< cr$ or 
$r< e_1(x, y)$. 
By the definition of $D$, 
for all $y\in X$ we have 
$D(x, y)< cr$ 
or 
$r< D(x, y)$. 
Thus,  
$D$ is not uniformly perfect. 
\end{proof}

By Corollary 4.4 and Proposition 4.11 in  \cite{Ishiki2020int}, 
we obtain: 
\begin{lem}\label{lem:udfsigma}
For a metrizable spaces $X$, 
the set of all uniformly disconnected  metrics in $\met(X)$ 
is $F_{\sigma}$
in $(\met(X), \mathcal{D}_X)$. 
\end{lem}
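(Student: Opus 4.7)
The plan is to present the set of all uniformly disconnected metrics as a countable union of closed subsets of $(\met(X),\mathcal{D}_X)$. For each $\delta\in(0,1)$, let $U_\delta$ denote the subset of $\met(X)$ consisting of those $d$ that satisfy the $\delta$-uniform disconnectedness inequality for every non-constant finite sequence in $X$. Because $U_\delta\subseteq U_{\delta'}$ whenever $0<\delta'\le\delta<1$, the set of all uniformly disconnected metrics in $\met(X)$ equals $\bigcup_{n\ge 2}U_{1/n}$, so it suffices to show that each $U_\delta$ is closed in $(\met(X),\mathcal{D}_X)$.

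To establish the closedness of $U_\delta$, I would take a sequence $\{d_k\}_{k\in\nn}$ in $U_\delta$ that converges to some $d\in\met(X)$ with respect to $\mathcal{D}_X$. By the definition of $\mathcal{D}_X$, one has $\sup_{x,y\in X}|d_k(x,y)-d(x,y)|\to 0$; in particular, $d_k(z,w)\to d(z,w)$ for every fixed pair $z,w\in X$. Now fix any non-constant finite sequence $z_1,\dots,z_N$ in $X$. For each $k$ we have the inequality $\delta d_k(z_1,z_N)\le \max_{1\le i\le N}d_k(z_i,z_{i+1})$, which involves only finitely many distance values, and taking $k\to\infty$ yields $\delta d(z_1,z_N)\le \max_{1\le i\le N}d(z_i,z_{i+1})$. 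Since the sequence was arbitrary, $d\in U_\delta$, as required.

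There is no substantial obstacle here. The two observations that drive the argument are that the parameter $\delta$ witnessing uniform disconnectedness may always be replaced by one taken from the countable set $\{1/n\}_{n\ge 2}$, and that the defining inequality involves only finitely many pairs of points at a time, so the uniform convergence supplied by $\mathcal{D}_X$ transfers the inequality to the limit metric. Together these two points yield the desired $F_\sigma$ presentation, in a manner entirely parallel to the proof of Lemma \ref{lem:doublingfsigma} already recorded above.
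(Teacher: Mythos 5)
Your argument is correct: the sets $U_{1/n}$ are closed because the defining inequality for a fixed $\delta$ is a pointwise condition involving only finitely many distance values at a time, and it passes to limits under $\mathcal{D}_X$-convergence (indeed pointwise convergence suffices, and $\mathcal{D}_X$-convergence is even uniform); the monotonicity $U_{\delta}\subseteq U_{\delta'}$ for $\delta'\le\delta$ lets you restrict the witnessing parameter to the countable family $\{1/n\}_{n\ge 2}$, so the set of uniformly disconnected metrics is exactly $\bigcup_{n\ge 2}U_{1/n}$, an $F_{\sigma}$ set. The paper does not argue this directly: it obtains the lemma by citing Corollary 4.4 and Proposition 4.11 of \cite{Ishiki2020int}, where uniform disconnectedness is treated as an instance of a \emph{transmissible property}, and the $F_{\sigma}$-ness of the set of metrics satisfying such a property is proved once and for all in that general framework. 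Your proof is a self-contained, elementary unpacking of what that machinery yields in this special case, which makes the present paper more readable at the cost of not exhibiting the general principle; the paper's citation buys uniformity (the same argument covers the doubling property in Lemma \ref{lem:doublingfsigma} and the ultrametric analogues) but hides the short direct verification you have supplied. One cosmetic remark: as in the paper's own statement of uniform disconnectedness, the maximum should be read over consecutive pairs of the chain (i.e., $1\le i\le N-1$), and it is worth noting explicitly that closedness may be tested sequentially because the topology induced by the extended metric $\mathcal{D}_X$ is metrizable.
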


\begin{proof}[Proof of Theorem \ref{thm:denseUD}]
Assume  that 
$X$ is compact and $0$-dimensional. 
Take arbitrary 
$d\in \met(X)$ and $\epsilon\in (0, \infty)$. 
Since $X$ is compact and $0$-dimensional, 
there exists a covering 
$\{B_i\}_{i=1}^n$ of $X$  of  mutually disjoint
clopen subsets with 
$\delta_{d}(B_i)\le \epsilon$. 
Since each $B_i$ is $0$-dimensional, 
there exists a uniformly disconnected metric  
$e_i\in \met(B_i)$ 
with 
$\delta_{e_i}(B_i)\le \epsilon$. 
Applying Proposition \ref{prop:amal} 
to $d$ and $\{e_i\}_{i=1}^n$, 
we obtain $D\in \met(X)$ 
with $\mathcal{D}_X(D, d)\le 4\epsilon$. 
By  Lemma \ref{lem:k=123}, 
the metric $D$ is uniformly disconnected. 
Thus, 
by  Lemma \ref{lem:udfsigma}, 
the set of all uniformly disconnected  metrics 
is dense $F_{\sigma}$ in $\met(X)$. 
We prove the opposite. 
If there exists a uniformly disconnected  metric in $\met(X)$, 
then $X$ is $0$-dimensional. 
For the sake of contradiction, 
suppose that $X$ is not compact. 
Then, there exists a countable  closed discrete subset $F$ of $X$. 
Identify $F$ with $\zz$, 
and 
let $e$ be the relative Euclidean  metric  on 
$\zz (=F)$. 
By Theorem \ref{thm:Haus}, 
there exists  $D\in \met(X)$ such that 
$D|_{F^2}=e$. 
Let $U$ be the open ball centered at $D$ with radius $1/2$ in 
$(\met(X), \mathcal{D}_X)$
Take $d\in U$. 
Since $e$ is not uniformly disconnected, 
for every $\delta\in (0, 1)$ there exists a non-constant 
finite sequence  
$\{z_i\}_{i=1}^N$ in $F$ 
with  
$\max_{1\le i\le N}D(z_{i}, z_{i+1})<4\delta D(z_1, z_N)$. 
By   $\mathcal{D}_X(d, D)<1/2$, and by 
$1\le D(z_1, z_{Z})$ and $1\le \max_{1\le i\le N}D(z_{i}, z_{i+1})$, 
we have 
$D(z_1, z_{N})\le 2d(z_1, z_N)$ and 
$\max_{1\le i\le N}d(z_{i}, z_{i+1})\le 2\max_{1\le i\le N}D(z_{i}, z_{i+1})$. 
Then we obtain 
$\max_{1\le i\le N}d(z_{i}, z_{i+1})<\delta d(z_1, z_{N})$. 
This implies that $d$ is not  uniformly disconnected. 
Thus, 
the open subset 
$U$ consists  of non-uniformly disconnected metrics, 
and hence the set of all uniformly disconnected metrics  is not dense 
in $\met(X)$. 
This finishes the proof. 
\end{proof}

Similarly to Lemma \ref{lem:k=123}, 
we obtain:
\begin{lem}\label{lem:k=123ult}
Let $S$ be a range set. 
Let $(X, d)$ be 
an $S$-valued ultrametric space. 
Let $\{B_i\}_{i=1}^n$ be a covering of $X$ 
consisting   of mutually disjoint  clopen  subsets, 
and let $\{p_i\}_{i=1}^n$ be points 
with 
$p_i\in B_i$. 
Let $\{e_i\}_{i=1}^{n}$ 
be  metrics such that 
$e_i\in \ult(B_i, S)$. 
Let  $D$ be the  metric constructed 
in  Proposition \ref{prop:amalult} 
from $d$ and $\{e_i\}_{i=1}^n$. 
Then for all $k\in \{1, 3\}$, 
the following hold true:
\begin{enumerate}
\item If each $e_i$ satisfies $\mathcal{P}_k$, 
then so does $(X, D)$. 
\item If some $e_i$ does not satisfy $\mathcal{P}_k$, 
then neither does $(X, D)$. 
\end{enumerate}
\end{lem}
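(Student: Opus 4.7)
The plan is to follow the proof of Lemma \ref{lem:k=123} step by step, replacing Proposition \ref{prop:amal} by Proposition \ref{prop:amalult} throughout. Since Proposition \ref{prop:amalult} already guarantees that $D$ is an ultrametric, the case $k = 2$ is automatic, which explains why the statement is restricted to $k \in \{1, 3\}$. I would therefore treat $k = 1$ and $k = 3$ in turn, each time proving (1) and then (2).

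For $k = 1$, statement (1) is essentially the same as in the metric case. Given a finite set $A \subseteq X$, I would decompose $A = \bigsqcup_{i=1}^{n} A_i$ with $A_i = A \cap B_i$. Because $D|_{B_i^2} = e_i$, one has $\delta_{e_i}(A_i) \le \delta_D(A)$ and $\alpha_D(A) \le \alpha_{e_i}(A_i)$, and applying the doubling inequality of $(B_i, e_i)$ piece by piece then summing over $i$ produces a doubling inequality for $(X, D)$ with constants depending on $n$ and the $C_i, \beta_i$. Statement (2) for $k = 1$ is immediate, since the doubling property is hereditary to arbitrary metric subspaces.

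For $k = 3$, the key step is statement (1). Let $c \in (0, 1)$ be a common uniform-perfectness constant for all $(B_i, e_i)$, and set $m = \min_i \delta_{e_i}(B_i) > 0$ and $\delta = \min_{i \ne j} d(p_i, p_j) > 0$ (positivity uses that each $B_i$ contains at least two points and that the $B_i$ are mutually disjoint). Given $x \in B_i$ and $r \in (0, \delta_D(X))$, I would split into cases: if $r \le \delta_{e_i}(B_i)$, uniform perfectness of $(B_i, e_i)$ directly yields $y \in B_i$ with $cr \le e_i(x, y) = D(x, y) \le r$; if $r > \delta_{e_i}(B_i)$, the amalgamation formula $D(x, p_j) = e_i(x, p_i) \lor d(p_i, p_j)$ lets one pick a suitable $j \ne i$ so that $D(x, p_j)$ lands in $[Cr, r]$. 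Since the scale set $\{\delta_{e_i}(B_i)\}_{i} \cup \{d(p_i, p_j)\}_{i \ne j}$ is finite, the minimum ratio between consecutive scales is positive, giving a uniform constant $C$ depending only on $c, m, \delta$ and $\delta_D(X)$. For statement (2), assume without loss of generality that $e_1$ fails uniform perfectness. By Lemma \ref{lem:upequiv}, for any $c \in (0, 1)$ one can find a witness $(x, r)$ with $x \in B_1$ and $r < \delta$; since every $y \notin B_1$ satisfies $D(x, y) \ge d(p_1, p_j) \ge \delta > r$, the same pair $(x, r)$ witnesses failure of uniform perfectness for $(X, D)$.

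The main obstacle is the scale-bridging in (1) for $k = 3$: one must go from uniform perfectness of each $(B_i, e_i)$ on its own small-scale range $(0, \delta_{e_i}(B_i)]$ to uniform perfectness of $(X, D)$ on the full range $(0, \delta_D(X))$, and the gap between $\max_i \delta_{e_i}(B_i)$ and the inter-piece distances $d(p_i, p_j)$ must be filled using only the finitely many available scales. The $\lor$-structure of the ultrametric amalgamation actually simplifies this compared with the metric case, since $D(x, p_j)$ is literally the maximum of two controlled quantities rather than their sum.
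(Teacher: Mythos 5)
Your overall plan (transcribe the proof of Lemma \ref{lem:k=123}, with Proposition \ref{prop:amalult} in place of Proposition \ref{prop:amal}) is exactly the paper's route: the paper proves this lemma by the words ``similarly to Lemma \ref{lem:k=123}'', and your treatment of $k=1$ (split $A$ into the $A_i=A\cap B_i$, use $D|_{B_i^2}=e_i$, take maxima of the constants) and of statement (2) for $k=3$ (a witness $(x,r)$ with $r<\min_{i\neq j}d(p_i,p_j)$ inside the bad block survives, since all cross-block distances exceed $r$) matches the paper's argument. Your observation that $k=2$ is omitted because $D$ is itself an ultrametric is also the right explanation.

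However, your key step for statement (1) with $k=3$ contains a genuine error. You claim that when $r>\delta_{e_i}(B_i)$ one can ``pick a suitable $j\neq i$ so that $D(x,p_j)$ lands in $[Cr,r]$''. This fails whenever $\delta_{e_i}(B_i)<r<\min_{j\neq i}d(p_i,p_j)$ (take $n=2$, blocks of $e_i$-diameter $1$, $d(p_1,p_2)=100$, $r=50$): then $D(x,p_j)=e_i(x,p_i)\lor d(p_i,p_j)\ge d(p_i,p_j)>r$ for every $j\neq i$, so no cross-block point satisfies the upper bound $D(x,y)\le r$. Your appeal to ``the minimum ratio between consecutive scales'' does not repair this recipe, because the obstruction is not the ratio of scales but the fact that the proposed witness point is simply too far away. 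The correct (and simpler) fix, which is what the paper's constant in Lemma \ref{lem:k=123} encodes, is to stay inside $B_i$ for such $r$: by the strong triangle inequality, for every $\epsilon>0$ there is $y\in B_i$ with $e_i(x,y)>\delta_{e_i}(B_i)-\epsilon$, and with $m=\min_i\delta_{e_i}(B_i)$ and $C=\tfrac12\min\{c,\,m/\delta_D(X)\}$ one gets $Cr\le m/2\le D(x,y)\le\delta_{e_i}(B_i)<r$; the points $p_j$ of the other blocks and the inter-block distances are never needed. With that replacement your argument goes through and coincides with the paper's.
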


The author \cite{I4} proved the extension theorem on ultrametrics, 
which is 
an ultrametric analogue of Theorem \ref{thm:Haus} 
(see \cite[Theorem 1.2]{I4}). 
\begin{thm}\label{thm:ultext}
Let $S$ be a range set. 
Let $X$ be a topological space with 
$\ult(X, S)\neq \emptyset$, 
and let $A$ be a closed subset of $X$. 
Then for every $d\in \ult(A, S)$
there exists $D\in \ult(X, S)$ with 
$D|_{A^2}=d$. 
\end{thm}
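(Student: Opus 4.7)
The plan is to extend $d$ from $A^2$ to $X^2$ by coupling it with an auxiliary ultrametric $\rho\in\ult(X,S)$ through a ``nearest anchor'' construction. If $A=\emptyset$, then any element of $\ult(X,S)$ serves as $D$, so assume $A\neq\emptyset$ and fix $\rho\in\ult(X,S)$, which exists by hypothesis.

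First, I would exploit the clopen ball structure of ultrametric spaces to decompose $X\setminus A$. For each $x\in X\setminus A$ the number $\rho(x,A):=\inf_{a\in A}\rho(x,a)$ is strictly positive since $A$ is closed; set $B(x)=\{\,y\in X\mid \rho(x,y)<\rho(x,A)\,\}$. This is a $\rho$-ball, hence clopen, it contains $x$, and it is disjoint from $A$. A routine ultrametric calculation shows that $B(x)\cap B(x')\neq\emptyset$ forces $\rho(x,A)=\rho(x',A)$ and $B(x)=B(x')$, so the sets $B(x)$ form a clopen partition $\{B_i\}_{i\in I}$ of $X\setminus A$. Moreover, for every $a\in A$ and all $b,b'\in B_i$ the strong triangle inequality forces $\rho(a,b)=\rho(a,b')$; denote this common value by $\rho(a,B_i)$. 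Since $A$ is closed, $\rho(A,B_i):=\inf_{a\in A}\rho(a,B_i)>0$, and for each $i\in I$ I would pick an anchor $a_i\in A$ satisfying $R_i:=\rho(a_i,B_i)\le 2\rho(A,B_i)$.

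Next, define $D\colon X^2\to [0,\infty)$ by four cases: $D(x,y)=d(x,y)$ if $x,y\in A$; $D(x,y)=\rho(x,y)$ if $x,y$ belong to a common $B_i$; $D(x,y)=R_i\lor d(a_i,y)$ if $x\in B_i$ and $y\in A$; and $D(x,y)=R_i\lor d(a_i,a_j)\lor R_j$ if $x\in B_i$ and $y\in B_j$ with $i\neq j$. Symmetry, $S$-valuedness, and the identity $D|_{A^2}=d$ are immediate. The strong triangle inequality reduces to a finite case analysis exploiting the ultrametricity of $\rho$ inside each $B_i$, the ultrametricity of $d$ on $A$, the identity $\rho(x,a_i)=R_i$ for every $x\in B_i$, and the bound $\rho(y,y')<\rho(A,B_i)\le R_i$ for $y,y'\in B_i$.

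The main obstacle is showing that $D$ generates the topology of $X$. Inside each clopen $B_i$ we have $D=\rho$, so the topologies trivially agree there. The delicate case is a point $x\in A$: if $x_n\in B_{i(n)}$ converges to $x$ in the original topology, then the anchor choice yields $R_{i(n)}\le 2\rho(A,B_{i(n)})\le 2\rho(x_n,x)\to 0$, hence $\rho(a_{i(n)},x)\le R_{i(n)}\lor \rho(x_n,x)\to 0$, and since $d$ and $\rho$ are topologically equivalent on $A$ we also have $d(a_{i(n)},x)\to 0$, so $D(x_n,x)=R_{i(n)}\lor d(a_{i(n)},x)\to 0$. The converse implication---from $D(x_n,x)\to 0$ to convergence of $x_n$ to $x$ in the original topology---is obtained by running the same chain of ultrametric inequalities in reverse, completing the verification.
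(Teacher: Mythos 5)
The paper itself contains no proof of Theorem \ref{thm:ultext}: it is imported from \cite[Theorem 1.2]{I4}, so there is no in-text argument to compare against. Judged on its own, your construction is correct. The sets $B(x)=\{\,y\mid \rho(x,y)<\rho(x,A)\,\}$ do form a clopen partition of $X\setminus A$ on which each function $\rho(a,\cdot)$, $a\in A$, is constant; the anchored formula for $D$ is $S$-valued (each $R_i$ is a value of $\rho$, and a maximum of finitely many elements of $S$ is again in $S$), restricts to $d$ on $A^2$, and the case-by-case verification of the strong triangle inequality goes through using exactly the facts you list, in particular $\rho(y,y')<\rho(A,B_i)\le R_i$ for $y,y'\in B_i$ and the ultrametricity of $d$ among the anchors. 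The topological check is also sound: since $D(x,y)\ge R_i$ whenever $x\in B_i$ and $y\notin B_i$, the $D$-topology and the original topology agree near points of $X\setminus A$, and at a point $x\in A$ your sequential argument works in both directions --- for the converse, $D(x_n,x)\to 0$ with $x_n\in B_{i(n)}$ forces $R_{i(n)}\to 0$ and $d(a_{i(n)},x)\to 0$, hence $\rho(a_{i(n)},x)\to 0$ and $\rho(x_n,x)\le R_{i(n)}\lor\rho(a_{i(n)},x)\to 0$; arguing with sequences is legitimate because both candidates are metrics. Structurally, your proof is an ``anchored amalgamation'' in the same spirit as Proposition \ref{prop:amalult} of this paper (cross-block distances routed through chosen base points), adapted so that the prescribed ultrametric on the closed set $A$ is preserved verbatim; what it buys is a self-contained proof of the extension theorem that does not rely on the external reference \cite{I4}.
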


By Corollary 6.4 and Proposition 6.8 in \cite{I4}, 
we obtain: 
\begin{lem}\label{lem:ultdoublingfsigma}
Let $S$ be a range set. 
For a topological space $X$, 
the set of all doubling  metrics in 
$\ult(X, S)$ is $F_{\sigma}$
in $\ult(X, S)$. 
\end{lem}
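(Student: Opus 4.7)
The plan is to invoke the author's own machinery from \cite{I4} directly: the doubling property is a prototypical transmissible property in the sense of that paper, parameterized by pairs $(C,\beta)$, and Corollary 6.4 together with Proposition 6.8 of \cite{I4} assert precisely that for such a property the corresponding set of $d \in \ult(X,S)$ is $F_\sigma$ in $(\ult(X,S), \mathcal{UD}_X^S)$. So, at the top level, the proof amounts to verifying that the doubling property fits the transmissible framework and applying the cited results.

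To indicate why the conclusion holds in the present setting, I would decompose the set of doubling ultrametrics as the countable union $\bigcup_{(C,\beta)} T_{C,\beta}$, where $(C,\beta)$ ranges over $\mathbb{N}_{\ge 1}\times (\mathbb{Q}\cap(0,\infty))$ and
\[
T_{C,\beta}=\{\,d\in \ult(X,S)\mid \card(A)\le C\,(\delta_d(A)/\alpha_d(A))^{\beta}\text{ for every finite $A\subseteq X$ with $\card(A)\ge 2$}\,\}.
\]
It suffices to prove that each $T_{C,\beta}$ is closed in $(\ult(X,S),\mathcal{UD}_X^S)$. The pivotal observation is that convergence in $\mathcal{UD}_X^S$ stabilizes finite distance data exactly: if $\mathcal{UD}_X^S(d_n,d)\to 0$ and $d(x,y)>0$, then for all sufficiently large $n$ we have $\mathcal{UD}_X^S(d_n,d)<d(x,y)$, and the defining inequalities $d(x,y)\le d_n(x,y)\vee \mathcal{UD}_X^S(d_n,d)$ and $d_n(x,y)\le d(x,y)\vee \mathcal{UD}_X^S(d_n,d)$ then force $d_n(x,y)=d(x,y)$. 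Hence for any finite non-singleton $A$, the quantities $\delta_{d_n}(A)$ and $\alpha_{d_n}(A)$ eventually equal $\delta_d(A)$ and $\alpha_d(A)$, so the inequality defining $T_{C,\beta}$ passes to the limit.

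The main (and rather mild) obstacle is simply keeping track of the difference between the sup-norm topology $\mathcal{D}_X$ used in the classical Lemma \ref{lem:doublingfsigma} and the ultrametric topology $\mathcal{UD}_X^S$ used here. As the stabilization observation shows, convergence in $\mathcal{UD}_X^S$ is in fact \emph{pointwise eventually constant} on pairs $(x,y)$ with $d(x,y)>0$, which is strictly stronger than the continuity of $d\mapsto d(x,y)$ in the sup-norm case; consequently, the closedness of $T_{C,\beta}$ is even more immediate than in the metric setting, and the $F_\sigma$ conclusion follows.
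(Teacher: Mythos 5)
Your proposal is correct, and its top-level move is exactly what the paper does: the paper's entire ``proof'' of this lemma is the citation of Corollary 6.4 and Proposition 6.8 of \cite{I4}, where the doubling property is handled as a transmissible property. What you add beyond that is a self-contained elementary argument, and it is sound: the decomposition of the doubling metrics as $\bigcup_{(C,\beta)} T_{C,\beta}$ over $(C,\beta)\in \zz_{\ge 1}\times(\qq\cap(0,\infty))$ is exhaustive because $\delta_d(A)/\alpha_d(A)\ge 1$ for $\card(A)\ge 2$, so one may always enlarge $C$ and $\beta$ to rational values; and your key observation that $\mathcal{UD}_X^S$-convergence eventually freezes every positive distance (if $\mathcal{UD}_X^S(d_n,d)<d(x,y)$ then the two defining inequalities force $d_n(x,y)=d(x,y)$) is correct and makes the closedness of each $T_{C,\beta}$ immediate, since a finite set has only finitely many pairs. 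This direct route buys independence from the transmissible-property machinery of \cite{I4} and makes transparent why the ultrametric topology on $\ult(X,S)$ behaves even better than the sup-norm topology on $\met(X)$ for this purpose; the paper's citation buys brevity and uniformity with the treatment of the other properties. One cosmetic point: you should state explicitly that $(\ult(X,S),\mathcal{UD}_X^S)$ is first countable (it carries an extended-valued ultrametric), so that sequential closedness of $T_{C,\beta}$ suffices.
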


\begin{proof}[Proof of Theorem \ref{thm:doubling:ult}]
By Lemma \ref{lem:ultdoublingfsigma}, 
the set of all doubling metrics in $\ult(X, S)$ 
is  $F_{\sigma}$ in $\ult(X, S)$. 
Assume first   that $X$ is compact. 
By \cite[Proposition 2.12]{I4}, 
we have $\ult(X, S)\neq \emptyset$. 
Take arbitrary $d\in \ult(X, S)$ and $\epsilon\in (0, \infty)$. 
Since $X$ is compact  and  $0$-dimensional, 
there exists a disjoint  covering  $\{B_i\}_{i=1}^n$  of 
 clopen subsets with $\delta_{d}(B_i)\le \epsilon$. 
Then, 
there exists a doubling metric  $e_i\in \ult(B_i, S)$ with 
$\delta_{e_i}(B_i)\le \epsilon$. 
Applying Proposition \ref{prop:amalult} 
to $d$ and $\{e_i\}_{i=1}^n$, 
we obtain $D\in \ult(X, S)$ 
with $\mathcal{UD}_X^S(D, d)\le \epsilon$. 
By  Lemma \ref{lem:k=123ult}, 
the metric $D$ is doubling. 
Thus, 
the set of all doubling metrics in $\ult(X, S)$ 
is dense in $\ult(X, S)$. 
We next prove the opposite. 
Similarly to the proof of Theorem 
\ref{thm:dense:D}, 
by using  Theorem \ref{thm:ultext}, 
we conclude that if the set of all doubling metrics in $\ult(X, S)$ 
is dense in 
$\ult(X, S)$, 
then $X$ is compact. 
\end{proof}

\section{The uniform perfectness}

Fix a countable dense subset $P$ of $\Gamma$. 
For $c\in (0, 1)$, 
let $K(c)$ denote the set of all 
$d\in \met(\Gamma)$ such that 
for every $r\in (0, 1)\cap \qq$, 
and for every 
$x\in P$, there exists $y\in P$ satisfying that $c\cdot r\le d(x, y)\le r$. 
Let $K$ denote the set of all uniformly perfect metrics in $\met(\Gamma)$. 

\begin{lem}\label{lem:upfsigma}
The set $K$ is an 
$F_{\sigma}$ subset of $\met(\Gamma)$. 
\end{lem}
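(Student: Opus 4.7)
The plan is to exhibit $K$ as a countable union of closed subsets of $(\met(\Gamma), \mathcal{D}_{\Gamma})$. For each integer $n \ge 2$, let $K_n$ denote the set of all $(1/n)$-uniformly perfect metrics in $\met(\Gamma)$. Since every uniformly perfect metric is $c$-uniformly perfect for some $c \in (0,1)$, and hence $(1/n)$-uniformly perfect for all $n \ge 1/c$, we have $K = \bigcup_{n \ge 2} K_n$. It therefore suffices to show that each $K_n$ is closed.

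To verify closedness, I would take a sequence $(d_k)_k$ in $K_n$ converging to $d$ in $(\met(\Gamma), \mathcal{D}_{\Gamma})$ and check $d \in K_n$. Fix $x \in \Gamma$ and $r \in (0, \delta_d(\Gamma))$. Since $|\delta_{d_k}(\Gamma) - \delta_d(\Gamma)| \le 2 \mathcal{D}_{\Gamma}(d_k, d) \to 0$, there is $k_0$ with $r < \delta_{d_k}(\Gamma)$ for all $k \ge k_0$, and the $(1/n)$-uniform perfectness of $d_k$ yields $y_k \in \Gamma$ with $r/n \le d_k(x, y_k) \le r$. By compactness of $\Gamma$, a subsequence $y_{k_j}$ converges to some $y^* \in \Gamma$; the uniform convergence $d_k \to d$ together with the continuity of $d$ gives $d_{k_j}(x, y_{k_j}) \to d(x, y^*)$, so $r/n \le d(x, y^*) \le r$. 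Thus $d \in K_n$, and $K_n$ is closed.

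The main subtlety is to ensure that the closed-interval inequality $r/n \le d(x, y^*) \le r$ survives the limiting process; this depends crucially on the compactness of $\Gamma$ to produce the limit point $y^*$ inside $\Gamma$, together with the uniform convergence provided by $\mathcal{D}_{\Gamma}$. I also remark that the sets $K(c)$ defined in the excerpt via the countable dense set $P$ and rational radii $r \in \qq \cap (0,1)$ yield, by a direct calculation, only an $F_{\sigma\delta}$ description of $K(c)$ and so do not immediately deliver the $F_{\sigma}$ claim for $K$; they appear to be tailored to later arguments (such as establishing density or Baire-category statements), whereas the closedness of each $K_n$ above is the natural route to the $F_{\sigma}$ conclusion.
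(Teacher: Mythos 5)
Your proof is correct, but it follows a genuinely different route from the paper. You decompose $K$ as $\bigcup_{n\ge 2}K_n$, where $K_n$ is the set of $(1/n)$-uniformly perfect metrics, and prove directly that each $K_n$ is closed: for $d_k\to d$ in $\mathcal{D}_\Gamma$ you use $\delta_{d_k}(\Gamma)\to\delta_d(\Gamma)$ to make the radius $r$ admissible for large $k$, and then compactness of $\Gamma$ plus uniform convergence to pass the two-sided bound $r/n\le d_k(x,y_k)\le r$ to a limit point $y^*$. All steps check out (the factor $2$ in your bound on $|\delta_{d_k}(\Gamma)-\delta_d(\Gamma)|$ is harmless), and your decomposition does capture all of $K$, since $c$-uniform perfectness implies $(1/n)$-uniform perfectness for $n\ge 1/c$. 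The paper instead works with the sets $K(c)$ built from the countable dense set $P$ and rational radii: it does not claim these are closed, but shows $\cl(K(c))\subseteq K(c/4)$ by a Cantor diagonal argument (again using compactness of $\Gamma$), whence $K=\bigcup_{c\in(0,1)\cap\qq}\cl(K(c))$ is $F_\sigma$. So your parenthetical remark that the $K(c)$ are ``tailored to later arguments'' is slightly off: they are exactly the paper's device for this lemma, used via the closure trick rather than via closedness of the pieces. Your argument is the more direct one and yields the stronger statement that the set of $c$-uniformly perfect metrics, for each fixed $c$, is closed in $(\met(\Gamma),\mathcal{D}_\Gamma)$; the paper's argument sacrifices the constant ($c$ degrades to $c/4$) but phrases everything in terms of countable data, in the same style it reuses for the ultrametric analogue.
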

\begin{proof}
By the definitions, 
we have  
$K=\bigcup_{c\in (0, 1)\cap \qq}K(c)$. 
For every $c\in (0, 1)\cap \qq$, 
we  prove that 
 $\cl(K(c))$ is contained in $K(c/4)$, 
 where $\cl$ is the closure operator of 
 $\met(\Gamma)$. 
 Take $d\in\cl(K(c))$, 
 and take a sequence $\{d_n\}_{n\in \zz_{\ge 0}}$ in $K(c)$ 
such that $d_n\to d$ as $n\to \infty$ 
in $(\met(\Gamma), \mathcal{D}_{\Gamma})$. 
Then for every $n\in \zz_{\ge 0}$,  
for every $r\in (0, 1)\cap \qq$,  
and for every $x\in P$, 
there exists $y(n, r, x)\in P$ such that 
$c\cdot (r/2)\le d_n(x, y(n, r, x))\le r/2$.
Since $(0, 1)\cap \qq$ and $P$ are countable, 
and since $\Gamma$ is compact,  
we can apply  Cantor's diagonal argument to $y(n, r, x)$, 
and hence 
we obtain a strictly increasing map $\phi:\zz_{\ge 0}\to \zz_{\ge 0}$ 
such that 
for every $r\in (0, 1)\cap \qq$,  
and for every $x\in P$, 
the sequence $\{y(\phi(n), r, x)\}_{n\in \zz_{\ge 0}}$ converges to 
a point in $X$, say $z(r, x)$. 
Take  $p(r, x)\in P$ with  
$d(p(r, x), z(r, x))\le (c/4)r$. 
Letting $n \to \infty$, 
we have 
$c\cdot (r/2)\le d(x, z(r, x))\le r/2$, 
and hence 
we have $(c/4)\cdot r\le d(x, p(r, x))\le r$. 
Thus,  $d\in K(c/4)$, 
and hence 
 $\cl(K(c))$ is contained in $K(c/4)$. 
By this observation, 
we conclude that
$K=\bigcup_{c\in (0, 1)\cap \qq}\cl(K(c))$. 
Thus $K$ is  $F_{\sigma}$ in $\met(\Gamma)$. 
\end{proof}
Similarly to Lemma \ref{lem:upfsigma}, we obtain:
\begin{lem}\label{lem:upult}
Let $S$ be a range set. 
The set $K\cap \ult(\Gamma, S)$ is 
$F_{\sigma}$ in  $\ult(\Gamma, S)$. 
\end{lem}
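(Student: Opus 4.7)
The plan is to mimic the structure of Lemma \ref{lem:upfsigma}, which writes the set of uniformly perfect metrics as a countable union of sets associated to rational constants. Setting $K_S(c) = K(c) \cap \ult(\Gamma, S)$ for each $c \in (0,1) \cap \qq$, the identity $K = \bigcup_c K(c)$ established in the metric case immediately yields $K \cap \ult(\Gamma, S) = \bigcup_c K_S(c)$ after intersecting both sides with $\ult(\Gamma, S)$. Hence it suffices to show that each $K_S(c)$ is closed in $(\ult(\Gamma, S), \mathcal{UD}_\Gamma^S)$; in that case no extra ``closure step'' is needed and $K \cap \ult(\Gamma, S)$ is $F_\sigma$ on the nose.

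The key feature of the ultrametric $\mathcal{UD}_\Gamma^S$ that makes the ultrametric version cleaner than its metric counterpart is the following rigidity property: if $\mathcal{UD}_\Gamma^S(d', d'') < \delta$ and $d'(x, y) \ge \delta$, then $d'(x, y) = d''(x, y)$. This is immediate from the defining inequalities $d'(x, y) \le d''(x, y) \lor \epsilon$ and $d''(x, y) \le d'(x, y) \lor \epsilon$ for some $\epsilon \in S \sqcup \{\infty\}$ with $\epsilon < \delta$: the hypothesis $d'(x, y) > \epsilon$ forces both maxima to collapse to their non-$\epsilon$ terms. Thus $\mathcal{UD}_\Gamma^S$-close ultrametrics agree exactly above the convergence scale, which eliminates the triangle-inequality slack and the Cantor diagonal procedure used in Lemma \ref{lem:upfsigma}.

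To verify closedness, take $d \in \ult(\Gamma, S)$ and a sequence $\{d_n\}_{n\in \zz_{\ge 0}}$ in $K_S(c)$ with $d_n \to d$ in $\mathcal{UD}_\Gamma^S$. Fix $r \in (0,1) \cap \qq$ and $x \in P$. Choose $n$ large enough that $\mathcal{UD}_\Gamma^S(d_n, d) < cr$. Since $d_n \in K_S(c)$, there exists $y \in P$ with $cr \le d_n(x, y) \le r$. Because $d_n(x, y) \ge cr > \mathcal{UD}_\Gamma^S(d_n, d)$, the rigidity observation gives $d(x, y) = d_n(x, y) \in [cr, r]$. Hence $d \in K_S(c)$, as required.

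I do not anticipate any serious obstacle. In contrast to Lemma \ref{lem:upfsigma}, where triangular slack forces the worsening $c \mapsto c/4$ and obliges one to extract a convergent subsequence of witnesses via the diagonal argument, the strong triangle inequality for $\mathcal{UD}_\Gamma^S$ preserves the constant exactly and allows the witness $y$ to be read off from a single $d_n$. The only point worth double-checking is the routine reduction $K \cap \ult(\Gamma, S) = \bigcup_c K_S(c)$, which is inherited directly from the metric case.
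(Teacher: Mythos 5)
Your proof is correct, but it takes a genuinely different route from the paper's. The paper proves this lemma ``similarly to Lemma \ref{lem:upfsigma}'': there one only shows the weaker inclusion $\cl(K(c))\subseteq K(c/4)$, which costs a degradation of the constant and requires extracting convergent witnesses $y(n,r,x)$ by a Cantor diagonal argument using the compactness of $\Gamma$, and then one rewrites $K$ as $\bigcup_c\cl(K(c))$. You instead observe a rigidity property special to $\mathcal{UD}_\Gamma^S$: if $\mathcal{UD}_\Gamma^S(d',d'')<\delta$ and $d'(x,y)\ge\delta$, then some admissible $\epsilon<\delta$ exists in the defining infimum, and the two inequalities $d'(x,y)\le d''(x,y)\lor\epsilon$, $d''(x,y)\le d'(x,y)\lor\epsilon$ force $d'(x,y)=d''(x,y)$; this lets you transfer the witness $y$ from a single nearby $d_n$ with no loss in the constant, so each $K(c)\cap\ult(\Gamma,S)$ is literally closed and no closure operation, subsequence extraction, or compactness of $\Gamma$ is needed. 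Both arguments rest on the same countable decomposition $K=\bigcup_{c\in(0,1)\cap\qq}K(c)$, which the paper also takes as definitional, so that reliance is unproblematic. What the paper's approach buys is uniformity with the metric case (one proof scheme for both lemmas); what yours buys is a shorter, sharper argument that exposes why the ultrametric setting is rigid, and it works verbatim even when $S$ lacks countable coinitiality, where convergence eventually forces equality of the metrics.
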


\begin{proof}[Proof of Theorem \ref{thm:up}]
By Lemma \ref{lem:upfsigma}, 
it suffices to show that $K$ is dense in 
$\met(\Gamma)$. 
Take arbitrary $d\in \met(\Gamma)$ 
and 
$\epsilon \in (0, \infty)$. 
Since $\Gamma$ is $0$-dimensional and compact, 
there exists a  covering 
$\{B_i\}_{i=1}^n$   of 
mutually disjoint 
clopen non-empty subsets  with $\delta_{d}(B_i)\le \epsilon$. 
Note that each $B_i$ is a Cantor space 
(see \cite[Corollary 30.4]{W1970}). 
Identify $B_i$ and $\Gamma$, 
and let $e_i\in \met(B_i)$ be the identified metric
with $\epsilon\cdot E$, 
where $E$ is the relative Euclidean metric on $\Gamma$. 
Then
each $e_i$ is 
  uniformly perfect and satisfies 
$\delta_{e_i}(B_i)\le \epsilon$. 
Applying Proposition \ref{prop:amal} 
to $d$ and  $\{e_i\}_{i=1}^{n}$, 
we obtain $D\in \met(\Gamma)$ 
with $\mathcal{D}_{\Gamma}(d, D)\le 4\epsilon$. 
Lemma \ref{lem:k=123} implies that 
$D$ is uniformly perfect. 
Thus $K$ is dense in $\met(\Gamma)$. 
This finishes the proof. 
\end{proof}

\begin{lem}\label{lem:conE}
Let $S$ be an exponential  range set. 
Then there exists a strictly decreasing sequence 
$\{s(n)\}_{n\in \zz_{\ge 0}}$ in $S$
satisfying that 
there exists $a\in (0, 1)$ and $M\in [1, \infty)$ 
such that 
for every $n\in \zz_{\ge 0}$ 
we have $M^{-1}a^n\le s(n)\le Ma^n$. 
\end{lem}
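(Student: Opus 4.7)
The plan is to exploit the exponential hypothesis by picking, for each index, a representative of $S$ lying in the interval $[M^{-1}a^n, Ma^n]$, and then pass to a subsequence whose gaps are wide enough that the upper endpoint of one interval sits strictly below the lower endpoint of the previous one. The choice of witnesses $a, M$ that verifies exponentiality of $S$ will serve as the raw material, and the final constants $a', M'$ will differ only by raising $a$ to a power.

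First I would fix witnesses $a\in(0,1)$ and $M\in[1,\infty)$ for the exponentiality of $S$, and for each $m\in\zz_{\ge 0}$ choose some $t(m)\in[M^{-1}a^{m}, Ma^{m}]\cap S$; this is possible by definition, and each $t(m)$ is positive since $a^{m}>0$. The sequence $\{t(m)\}_{m\in\zz_{\ge 0}}$ need not be monotone, because successive intervals $[M^{-1}a^{m}, Ma^{m}]$ and $[M^{-1}a^{m+1}, Ma^{m+1}]$ overlap as soon as $M^{2}\ge a^{-1}$, so the naive choice could fail to be strictly decreasing.

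To force strict decrease I would insert a gap $k$. Since $a\in(0,1)$, there exists $k\in\zz_{\ge 1}$ with $Ma^{k}<M^{-1}$, equivalently $a^{k}<M^{-2}$; fix one such $k$. Then for every $n\in\zz_{\ge 0}$,
\[
t\bigl((n{+}1)k\bigr)\le Ma^{(n+1)k}=(Ma^{k})\cdot a^{nk}<M^{-1}a^{nk}\le t(nk),
\]
so the subsequence $s(n):=t(nk)$ is strictly decreasing in $S$ and satisfies $M^{-1}(a^{k})^{n}\le s(n)\le M(a^{k})^{n}$. Setting $a':=a^{k}\in(0,1)$ and $M':=M\in[1,\infty)$ gives the required sequence.

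I do not expect a genuine obstacle here: the only subtlety is the potential non-monotonicity of the direct choice $\{t(m)\}_{m\in\zz_{\ge 0}}$, and that is dispatched by the thinning argument above. The remainder is just bookkeeping of constants.
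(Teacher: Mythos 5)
Your proof is correct and takes essentially the same route as the paper: thin the geometric scales by a factor $k$ chosen so that $Ma^{k}<M^{-1}$, which makes the consecutive intervals $[M^{-1}a^{kn}, Ma^{kn}]$ disjoint and ordered, and then select points of $S$ in them. The paper does the same via $a=b^{2p+1}$ with $p=-\log M/\log b$; your use of an integer exponent $k$ is if anything slightly cleaner, since it keeps every interval literally among those guaranteed by the definition of exponentiality.
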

\begin{proof}
By the assumption on $S$, 
there exist $b\in (0, 1)$ and $M\in [1, \infty)$ 
such that 
for every $n\in \zz_{\ge 0}$, 
we have $[M^{-1}b^n, Mb^n]\cap S\neq \emptyset$. 
Put $p=-\log M/\log b$. Put $a=b^{2p+1}$.
Then, 
$[M^{-1}a^n, Ma^n]\cap S\neq \emptyset$ 
and 
$Ma^{n+1}<M^{-1}a^n$ 
for all $n\in \zz_{\ge 0}$. 
This leads to the lemma. 
\end{proof}

A sequence 
$s: \zz_{\ge 0}\to (0, \infty)$ said to be \emph{shrinking} 
if 
it is a strictly decreasing sequence $(0, \infty)$ convergent to $0$. 
For a shrinking sequence $s: \zz_{\ge 0}\to (0, \infty)$, 
and for every $m\in \zz_{\ge 0}$,  
define  $s^{\{m\}}:\zz_{\ge 0}\to (0, \infty)$ by 
$s^{\{m\}}(n)=s(m+n)$. 
Then $s^{\{m\}}$ is shrinking. 
\par

Let $2^{\omega}$ be the set of all maps from 
$\zz_{\ge0}$ into $\{0, 1\}$. 
Define  
a valuation $v: 2^{\omega}\times 2^{\omega}\to [0, \infty]$ 
by 
$v(x, y)=\min\{\, n\in \zz_{\ge 0}\mid x(n)\neq y(n)\, \}$ if $x\neq y$; 
otherwise $v(x, y)=\infty$. 
Let $s$ be a shrinking sequence. 
Put $s(\infty)=0$. 
Define a metric $d_{s}$ on $2^{\omega}$ by 
$d_{s}(x, y)=s(v(x, y))$. 
Then 
for every shrinking sequence $s: \zz_{\ge 0}\to (0, \infty)$, 
and  
for every $m\in \zz_{\ge 0}$, 
the metric space $(2^{\omega}, d_{s^{\{m\}}})$ 
is a Cantor space. 
Note that for every $x\in 2^{\omega}$, 
and for every $n\in \zz_{\ge 0}$, 
there exists $d_{s}(x, y)=s(n)$. 
The metric space $(2^{\omega}, d_s)$ 
is called a \emph{sequentially metrized 
 Cantor space} in the author's paper  \cite{Ishiki2019}, 
 and the author investigated 
 the doubling property, the uniform disconnectedness, 
 and 
the uniform perfectness 
of sequentially metrized 
Cantor spaces. 
The following lemma is essentially contained in  
 \cite[Lemma 6.4]{Ishiki2019}. 
 
\begin{lem}\label{lem:sequp}
Let $S$ be a range set. 
Let $s:\zz_{\ge 0}\to S$ be a shrinking sequence in $S$. 
If there exist $a\in (0, 1)$ and $M\in [1, \infty)$  
 such that 
$M^{-1}a^n\le s(n)\le Ma^n$, 
then 
for every $m\in \zz_{\ge 0}$, 
the metric space $(2^{\omega}, d_{s^{\{m\}}})$ is 
uniformly perfect. 
\end{lem}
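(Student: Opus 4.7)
The plan is to verify the condition in Lemma \ref{lem:upequiv} directly, using the exponential sandwich $M^{-1}a^n\le s(n)\le Ma^n$ to control the multiplicative gap between consecutive terms of $s$. First I would reduce to the case $m=0$: since $s^{\{m\}}(n)=s(m+n)$ satisfies $M_1^{-1}a^n\le s^{\{m\}}(n)\le M_1 a^n$ for $M_1=Ma^{-m}\ge M$, the hypothesis is preserved under the shift, and it suffices to treat the sequence $s$ itself and then observe that the constants depend only on $M_1$ and $a$.

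Next, I would set $\delta=s(0)$ and let $c=a/M^2\in(0,1)$, and check the condition in Lemma \ref{lem:upequiv} with these values. Fix $x\in 2^{\omega}$ and $r\in(0,\delta)$. Since $s$ is strictly decreasing with $s(n)\to 0$ and $s(0)>r$, there is a unique $n\in\zz_{\ge 1}$ with
\[
s(n)\le r<s(n-1).
\]
Using the fact recalled before the lemma that for every $x\in 2^{\omega}$ and every $n\in\zz_{\ge 0}$ there exists $y\in 2^{\omega}$ with $d_s(x,y)=s(n)$ (namely, any $y$ agreeing with $x$ on the first $n$ coordinates and differing at coordinate $n$), choose $y$ so that $v(x,y)=n$, hence $d_s(x,y)=s(n)\le r$.

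Finally, for the lower bound I would combine the two exponential estimates:
\[
\frac{d_s(x,y)}{r}=\frac{s(n)}{r}>\frac{M^{-1}a^n}{Ma^{n-1}}=\frac{a}{M^2}=c,
\]
so $cr<d_s(x,y)\le r$, and Lemma \ref{lem:upequiv} gives that $(2^{\omega},d_s)$ is $c$-uniformly perfect. The whole argument is essentially bookkeeping with the two-sided exponential bound, and I do not foresee a real obstacle beyond correctly identifying the index $n$ associated with a given $r$; the critical structural input is that the valuation metric realizes exactly the values $s(n)$ from every point, so we can always hit distance $s(n)$ on the nose and the exponential bound converts the additive gap $s(n-1)-s(n)$ into a multiplicative gap uniformly bounded below.
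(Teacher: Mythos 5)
Your proof is correct and follows essentially the same route as the paper: pick $n$ with $s(n)\le r<s(n-1)$, realize the distance $s(n)$ exactly from $x$, and use the two-sided bound $M^{-1}a^n\le s(n)\le Ma^n$ to get the constant $c=a/M^{2}$ (the paper's indexing is shifted by one but otherwise identical). Your explicit justification of the reduction to $m=0$ and the restriction to $r<s(0)$ via Lemma \ref{lem:upequiv} are minor refinements the paper leaves implicit.
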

\begin{proof}
It suffices to show the case of $m=0$. 
Put $c=M^{-2}a$. 
Take arbitrary  $x\in 2^{\omega}$. 
For every $r\in (0, \infty)$, 
take $n\in \zz_{\ge 0}$ such that 
$s(n+1)< r\le s(n)$. 
Take $y\in 2^{\omega}$ with $d(x, y)=s(n+1)$.
Then, 
$cr\le d(x, y)\le r$. 
This implies 
that 
$(2^{\omega}, d_{s})$  is uniformly perfect. 
\end{proof}

\begin{proof}[Proof of Theorem \ref{thm:upult}]
 Similarly to  Theorem \ref{thm:doubling:ult}, 
the statement (2) follows from Proposition \ref{prop:amalult} and 
Lemmas \ref{lem:k=123ult} and \ref{lem:upult}. 
\par

We now prove the statement (1). 
Assume first that $S$ is exponential. 
Then by Lemma \ref{lem:conE},  
there exist $a\in (0, 1)$ 
and $M\in [1, \infty)$ 
and  a strictly decreasing  sequence $\{s(n)\}_{n\in \zz_{\ge 0}}$ 
 in $S$ 
 such that 
 $M^{-1}a^n\le s(n)\le Ma^n$. 
 Take arbitrary $d\in \ult(\Gamma, S)$ 
 and $\epsilon\in (0, \infty)$. 
Then, 
 there exists 
 a covering  $\{B_i\}_{i=1}^N$ of $\Gamma$ 
 consisting of mutually disjoint non-empty clopen subsets of with 
 $\delta_d(B_i)\le \epsilon$. 
 Note that each $B_i$ is a Cantor space. 
 For a  sufficiently large $m\in \zz_{\ge 0}$, 
 the space 
 $(2^{\omega}, d_{s^{\{m\}}})$ 
 satisfies $\delta_{d_{s^{\{m\}}}}(2^{\omega})\le \epsilon$. 
Identify $B_i$ and $2^{\omega}$, 
and let $e_i\in \ult(B_i, S)$ be the  identified metric with $d_{s^{\{m\}}}$. 
By applying Proposition \ref{prop:amalult} to $\{e_i\}_{i=1}^N$ and $d$, 
we obtain an $S$-valued ultrametric $D$ in $\ult(\Gamma, S)$ 
with $\mathcal{UD}_{X}^S(D, d)\le \epsilon$. 
By Lemmas \ref{lem:k=123} and \ref{lem:sequp}, 
the metric $D$ is uniformly perfect. 
This leads to the former part of the statement (1). 
Assume next that 
$S$ is not exponential. 
We now prove that every $d\in \ult(\Gamma, S)$ 
is not uniformly perfect. 
Take arbitrary $c\in (0, 1)$. 
Since $S$ is not exponential, 
there exists $n\in \zz_{\ge 0}$ 
with $[c^{n+1}, c^{n-1}]\cap S=\emptyset$. 
Take any  $x\in \Gamma$. 
Put $r=c^{n-1}$. 
Then every $y\in \Gamma$ satisfies 
$d(x, y)\le cr$ or $r\le d(x, y)$. 
Thus,  
every $d\in \ult(\Gamma, S)$ is not uniformly perfect. 
This finishes the proof. 
\end{proof}

Before proving Theorem \ref{thm:type}, 
remark that each  $\eee(u, v, w)$ contains a metric with 
arbitrary small diameter. This follows from 
the facts that   $\eee(u, v, w)\neq \emptyset$ (see \cite[Theorem 1.2]{Ishiki2019}), 
 and that  
for every $(u, v, w)\in \{0, 1\}^3$ and 
for every  $\epsilon\in (0, \infty)$, 
if  $d\in\eee(u, v, w)$, 
then $\epsilon d\in \eee(u, v, w)$. 

\begin{proof}[Proof of Theorem \ref{thm:type}]
Fix $(u, v, w)\in \{0, 1\}^3$. 
We now prove 
that 
$\eee(u, v, w)$ is dense in $\met(\Gamma)$. 
Take arbitrary $d\in \met(\Gamma)$ and $\epsilon \in (0, \infty)$. 
Since $\Gamma$ is compact and $0$-dimensional, 
there exists a covering  $\{B_i\}_{i=1}^n$ of $\Gamma$
consisting of 
mutually disjoint non-empty clopen subsets
 with $\delta_{d}(B_i)\le \epsilon$. 
Since the set $\eee(u, v, w)$ contains a metric with 
arbitrary small diameter, 
there exists  $e_i\in \met(B_i)$ of type $(u, v, w)$  with 
$\delta_{e_i}(B_i)\le \epsilon$. 
Applying Proposition \ref{prop:amal} to $d$ and $\{e_i\}_{i=1}^n$, 
we obtain $D\in \met(\Gamma)$ 
with $\mathcal{D}_{\Gamma}(D, d)\le 4\epsilon$. 
By  Lemma \ref{lem:k=123}, the metric $D$ is of type $(u, v, w)$. 
Thus, 
$\eee(u, v, w)$ is dense in $\met(\Gamma)$. 
For each $k\in \{1, 2, 3\}$, 
let $W(k, 1)$ denote the set of all metrics satisfying the property $\mathcal{P}_k$ in $\met(\Gamma)$, 
and put $W(k, 0)=\met(\Gamma)\setminus W(k, 1)$. 
By Lemmas \ref{lem:doublingfsigma}, \ref{lem:udfsigma}, 
and \ref{lem:upfsigma}, 
for all $k\in \{1, 2, 3\}$, 
the sets $W(k, 1)$ and $W(k, 0)$ are  $F_{\sigma}$ and $G_{\delta}$ in $\met(\Gamma)$, respectively. Thus, 
for all $(u, v, w)\in \{0, 1\}^3$, 
we have 
\begin{equation}\label{eq:eee}
\eee(u, v, w)=W(1, u)\cap W(2, v)\cap W(3, w).
\end{equation} 
By the equality (\ref{eq:eee}), 
the sets  $\eee(1,1,1)$ and $\eee(0, 0, 0)$ are
$F_{\sigma}$ and $G_{\delta}$, respectively. 
If $(u, v, w)\in \{0, 1\}^3$ coincides 
with neither  $(0,0,0)$ nor $(1,1,1)$, 
then
by the equality (\ref{eq:eee}), 
the set $\eee(u, v, w)$  is   the intersection of an $F_{\sigma}$ set 
and  a $G_{\delta}$ set in $\met(\Gamma)$. 
 Since $\met(\Gamma)$ is a  metrizable space, 
 the set  $\eee(u, v, w)$ is 
$F_{\sigma\delta}$ and $G_{\delta\sigma}$ in $\met(\Gamma)$.
 This finishes the proof. 
\end{proof}

\bibliographystyle{amsplain}
\bibliography{bibtex/cptmet.bib}

\end{document}